\newtheorem{theorem}{Theorem}
\theoremstyle{plain}
\newtheorem{corollary}{Corollary}
\numberwithin{equation}{section}
\begin{document}
\title[Functions With Fourier Transforms in Weighted Amalgam Spaces]{The
Banach Algebra of Functions With Fourier Transforms in Weighted Amalgam
Spaces}
\author{Cihan UNAL}
\address{Sinop University\\
Faculty of Arts and Sciences\\
Department of Mathematics}
\email{cihanunal88@gmail.com}
\urladdr{}
\thanks{}
\author{Ismail AYDIN}
\address{Sinop University\\
Faculty of Arts and Sciences\\
Department of Mathematics}
\email{iaydin@sinop.edu.tr}
\urladdr{}
\thanks{}
\subjclass[2000]{Primary 43A15; Secondary 46E30, 43A22}
\keywords{Amalgam spaces, inclusion, compact embedding, multipliers, Fourier
transform}
\dedicatory{}
\thanks{}

\begin{abstract}
In this paper, we define $A_{\vartheta _{1},\vartheta _{2}}^{p,1,q,r}\left(
G\right) $ to be space of all functions in $\left( L_{\vartheta
_{1}}^{p},\ell ^{1}\right) $ whose Fourier transforms belong to $\left(
L_{\vartheta _{2}}^{q},\ell ^{r}\right) .$ Moreover, we consider the basic
and advance properties of this space including Banach algebra, translation
invariant, Banach module, a generalized type of Segal algebra etc. Also, we
study some inclusions, compact embeddings in sense to weights and further
discuss multipliers of this space.
\end{abstract}

\maketitle

\section{Introduction}

Let $G$ be a locally compact abelian group with Haar measure $\mu $. For $%
1\leq p,q\leq \infty ,$ an amalgam space $\left( L^{p},\ell ^{q}\right)
\left( G\right) $ is to be a Banach space of all measurable functions on $G$
which belong locally to $L^{p}$ and globally to $\ell ^{q}$. Many authors
have considered special cases of amalgam spaces such as Krogstad \cite{Krog}%
, Liu et al. \cite{Liu2}, Szeptycki \cite{Sze}, Wiener \cite{Wie1}. Also,
Holland \cite{Holl} presented an important study for amalgams on the real
line. In 1979, Stewart \cite{Ste} extended Holland's definition to locally
compact abelian groups for locally compact groups by the Structure Theorem.

For $1\leq p<\infty ,$ the set $A_{p}\left( 
\mathbb{R}
^{d}\right) $ is the space of all complex-valued functions in $L^{1}\left( 
\mathbb{R}
^{d}\right) $ whose the Fourier transforms belong to $L^{p}\left( 
\mathbb{R}
^{d}\right) .$ This space have considered some authors, see \cite{Lai}, \cite%
{Lar}, \cite{M}. Moreover, Feichtinger and Gurkanli \cite{Feich3}, Fischer
et al. \cite{Fis} and Gurkanli \cite{Gur3} have found some generalized
results in sense to weights.

In 1926, Wiener \cite{Wie1} presented the first systematical work on amalgam
spaces $\left( L^{p},\ell ^{q}\right) $. The usage areas of the amalgam
spaces are generally harmonic and time-frequency analysis. For an another
historical journey, we can refer \cite{Four}. Moreover, the amalgam spaces
or some special cases of these spaces were investigate by a number of
authors, see \cite{Bert}, \cite{Bus}, \cite{Gur2}, \cite{Heil1}.

In this paper, we define a space $A_{\vartheta _{1},\vartheta
_{2}}^{p,1,q,r}\left( G\right) =\left\{ f\in \left( L_{\vartheta
_{1}}^{p},\ell ^{1}\right) :\widehat{f}\in \left( L_{\vartheta
_{2}}^{q},\ell ^{r}\right) \right\} $ and investigate the basic properties
of the space. Also, we consider several inclusions under some conditions.
Moreover, we discuss compact embeddings with other suitable spaces under
some conditions and reveal multipliers of the space $A_{\vartheta
_{1},\vartheta _{2}}^{p,1,q,r}\left( G\right) .$ One of the our purpose of
this paper is to generalize some of the results in \cite{A2} and \cite{Unal}
to the double weighted case.

\section{Notation and Preliminaries}

Throughout this paper, we will work on $G$ with Lebesgue measure $dx$. We
denote by $C_{c}\left( G\right) $ as the linear space of continuous
functions on $G$, which have compact support. The translation and\ character
operators $T_{y}$ and $M_{t}$ are defined by $T_{y}f(x)=f(x-y)$ and $%
M_{t}f(y)=\left\langle y,t\right\rangle .f(y)$ respectively for $x,y\in G$ , 
$t\in G.$ Also $(B,\left\Vert .\right\Vert _{B})$ is strongly translation
invariant if one has $T_{y}B\subseteq B$ and $\left\Vert T_{y}f\right\Vert
_{B}=\left\Vert f\right\Vert _{B}$ and strongly character invariant if $%
M_{t}B\subseteq B$ and $\left\Vert M_{t}f\right\Vert _{B}=\left\Vert
f\right\Vert _{B}$ for all $f\in B$, $y\in G$ and $t\in G$.

A measurable and locally integrable function $\vartheta :G\longrightarrow
\left( 0,\infty \right) $ is called a weight function. Moreover the weight $%
\vartheta $ will be called a Beurling's weight function if $\vartheta \left(
x\right) \geq 1$\ and $\vartheta (x+y)\leq \vartheta \left( x\right)
\vartheta \left( y\right) $ for all $x,y\in G$. We say that $\vartheta
_{1}\prec \vartheta _{2}$ if and only if there exists a $C>0$ such that $%
\vartheta _{1}\left( x\right) \leq C\vartheta _{2}\left( x\right) $ for all $%
x\in G$. Also, if $\vartheta _{1}\prec \vartheta _{2}$ and $\vartheta
_{2}\prec \vartheta _{1}$ are satisfied, then we say that weight functions $%
\vartheta _{1}$ and $\vartheta _{2}$ are equivalent and denoted by $%
\vartheta _{1}\thickapprox \vartheta _{2}$. A weight function $\vartheta $
is said to satisfy the Beurling-Domar (shortly BD) condition if 
\begin{equation*}
\sum\limits_{n\geq 1}\frac{\log \vartheta \left( nx\right) }{n^{2}}<\infty
\end{equation*}%
for all $x\in G,$ see \cite{Domar}. Moreover, it is clear that every weight
function is equivalent to a continuous weight, see \cite[Lemma 4]{Mur}.
Hence, we deduce that $\vartheta \left( x\right) \longrightarrow \infty $ as 
$x\longrightarrow \infty .$ For example, if we choose the polynomial type
weight function $\vartheta _{s}\left( x\right) =\left( 1+\left\vert
x\right\vert \right) ^{s}$ for $s\geq 0,$ then we have $\vartheta _{s}\left(
x\right) \longrightarrow \infty $ as $x\longrightarrow \infty ,$ see \cite%
{Reit}.

For $1\leq p<\infty ,$ the weighted Lebesgue space $L_{\vartheta }^{p}\left(
G\right) =\left\{ f:f\vartheta \in L^{p}\left( G\right) \right\} $ is a
Banach space with norm $\left\Vert f\right\Vert _{p,\vartheta }=\left\Vert
f\vartheta \right\Vert _{p}$ and its dual space $L_{\vartheta
^{-1}}^{q}\left( G\right) ,$ where $\frac{1}{p}+\frac{1}{q}=1.$ It is known
that the space $L_{\vartheta }^{p}\left( G\right) $ is a reflexive Banach
space for $1<p<\infty $. Moreover, for $p=1,$ $L_{\vartheta }^{1}\left(
G\right) $ is a Banach algebra under convolution, called a Beurling algebra.
It is obvious that $\left\Vert .\right\Vert _{1}\leq \left\Vert .\right\Vert
_{1,\vartheta }$ and $L_{\vartheta }^{1}\left( G\right) \subset L^{1}\left(
G\right) $. It is known that $L_{\vartheta _{2}}^{p}\left( G\right) \subset
L_{\vartheta _{1}}^{p}\left( G\right) $ if and only if $\vartheta _{1}\prec
\vartheta _{2}$, see \cite{Feich3}.

Now, assume that $A$ is a Banach algebra. It is known that a Banach space $B$
is called a Banach $A$-module if there exists a bilinear operation $\cdot
:A\times B\longrightarrow B$ such that

\begin{enumerate}
\item[\textit{(i)}] $\left( f\cdot g\right) \cdot h=f\cdot \left( g\cdot
h\right) $ for all $f,g\in A$, $h\in B.$

\item[\textit{(ii)}] For some constant $C\geq 1,$ $\left\Vert f\cdot
h\right\Vert _{B}\leq C\left\Vert f\right\Vert _{A}\left\Vert h\right\Vert
_{B}$ for all $f\in A,$ $h\in B,$ see \cite{Doran}.
\end{enumerate}

Moreover, $L_{loc}^{p}\left( G\right) $ is the space of all functions on $G$
such that $f$ restricted to any compact subset $K$ of $G$ belongs to $%
L^{p}\left( G\right) $. For $p=1,$ the space $L_{loc}^{1}\left( G\right) $
is to be space of all measurable functions $f$ on $G$ such that $f.\chi
_{K}\in L^{1}\left( G\right) $ for any compact subset $K\subset G$.
Moreover, a Banach function space (shortly BF-space) on $G$ is a Banach
space $\left( B,\left\Vert .\right\Vert _{B}\right) $ of measurable
functions which is continuously embedded into $L_{loc}^{1}\left( G\right) $,
i.e. for any compact subset $K\subset G$ there exists some constant $C_{K}>0$
such that $\left\Vert f.\chi _{K}\right\Vert _{L^{1}}\leq C_{K}.\left\Vert
f\right\Vert _{B}$ for all $f\in B$. Also, a BF-space is called solid if $%
g\in B,$ $f\in L_{loc}^{1}\left( G\right) $ and $\left\vert f\left( x\right)
\right\vert \leq \left\vert g\left( x\right) \right\vert $ locally almost
everywhere (shortly l.a.e) implies $f\in B$ and $\left\Vert f\right\Vert
_{B}\leq \left\Vert g\right\Vert _{B}$. It is easy to see that $\left(
B,\left\Vert .\right\Vert _{B}\right) $ is solid if and only if it is a $%
L^{\infty }$-module. Let $f$\ be a measurable function on $G.$

Suppose that $V$ and $W$ are two Banach modules over a Banach algebra $A$.
Then a multiplier from $V$ into $W$ is a bounded linear operator $%
T:V\longrightarrow W$, which commutes with module multiplication, i.e. $%
T(av)=aT(v)$ for $a\in A$ and $v\in V$. Also, we denote by $Hom_{A}\left(
V,W\right) $ as the space of all multipliers from $V$ into $W.$ For
convenience, we write that $Hom_{A}\left( V,V\right) =Hom_{A}\left( V\right) 
$. It is known that 
\begin{equation*}
Hom_{A}\left( V,W^{\ast }\right) \cong \left( V\otimes _{A}W\right) ^{\ast }
\end{equation*}%
where $W^{\ast }$ is dual of $W$ and $V\otimes _{A}W$ is the $A$-module
tensor product of $V$ and $W$,see \cite[Corollary 2.13]{Rief}.

Moreover, the space $M\left( G\right) $ denotes all bounded regular Borel
measures on $G.$ Now, we define%
\begin{equation*}
M\left( \vartheta \right) =\left\{ \mu \in M\left( G\right)
:\int\limits_{G}\vartheta d\left\vert \mu \right\vert <\infty \right\} .
\end{equation*}%
It is known that the space of multipliers from $L_{\vartheta }^{1}\left(
G\right) $ to $L_{\vartheta }^{1}\left( G\right) $ is homeomorphic to the
space $M\left( \vartheta \right) ,$see \cite{Gau}.

In \cite{Cigler}, Cigler revealed a generalization of Segal algebra. To
define this we suppose that $S_{\vartheta }(G)=S_{\vartheta }$ is a
subalgebra of $L_{\vartheta }^{1}(G)$ satisfying the conditions below.

\begin{enumerate}
\item[(S1)] $S_{\vartheta }$ is dense in $L_{\vartheta }^{1}(G).$

\item[(S2)] $S_{\vartheta }$ is a Banach algebra under some norm $\left\Vert
.\right\Vert _{S_{\vartheta }}$ and invariant under translations.

\item[(S3)] $\left\Vert T_{y}f\right\Vert _{S_{\vartheta }}\leq \vartheta
(y)\left\Vert f\right\Vert _{S_{\vartheta }}$ for all $y\in G$ and for each $%
f\in S_{\vartheta }$.

\item[(S4)] If $f\in S_{\vartheta }$, then for every $\varepsilon >0$ there
is a neighborhood $U$ of the identity element of $G$ such that $\left\Vert
T_{y}f-f\right\Vert _{S_{\vartheta }}<\varepsilon $ for all $y\in U.$

\item[(S5)] $\left\Vert f\right\Vert _{1,\vartheta }\leq \left\Vert
f\right\Vert _{S_{\vartheta }}$ for all $f\in S_{\vartheta }.$
\end{enumerate}

Denote the amalgam of $L^{p}$ and $\ell ^{q}$ on the real line is the normed
space 
\begin{equation*}
\left( L^{p},\ell ^{q}\right) =\left\{ f\in L_{loc}^{p}\left( 
\mathbb{R}
\right) :\left\Vert f\right\Vert _{pq}<\infty \right\}
\end{equation*}%
equipped with the norm%
\begin{equation}
\left\Vert f\right\Vert _{pq}=\left[ \tsum\limits_{n=-\infty }^{\infty }%
\left[ \tint\limits_{n}^{n+1}\left\vert f(x)\right\vert ^{p}dx\right] ^{%
\frac{q}{p}}\right] ^{\frac{1}{q}}.  \label{Normpq}
\end{equation}%
We make the appropriate changes for $p$, $q$ infinite. The norm (\ref{Normpq}%
) makes the amalgam space $\left( L^{p},\ell ^{q}\right) $ into a Banach
space, see \cite{Holl}. Note that, the space $C_{c}(G)$ is a subspace of
every amalgam spaces. Now, let $1\leq p,q<\infty $. Then, the dual space of $%
\left( L^{p},\ell ^{q}\right) $ is isometrically isomorphic to $\left(
L^{p^{\shortmid }},\ell ^{q^{\shortmid }}\right) $ where $\frac{1}{p}+\frac{1%
}{p^{\prime }}=\frac{1}{q}+\frac{1}{q^{\prime }}=1$, see \cite{A2}, \cite%
{Squ}.

Stewart \cite{Ste} give an alternative definition of $\left( L^{p},\ell
^{q}\right) \left( G\right) $ based on the Structure Theorem \cite[Theorem
24.30]{Hew}. Indeed, let $G=%
\mathbb{R}
^{a}\times G_{1}$, where $a$ is a nonnegative integer and $G_{1}$ is a
locally compact abelian group which contains an open compact subgroup $H$.
Also, we denote $I=\left[ 0,1\right) ^{a}\times H$ and $J=%
\mathbb{Z}
^{a}\times T$ where $T$ is a transversal of $H$ in $G_{1}$, i.e. $%
G_{1}=\tbigcup\limits_{t\in T}\left( t+H\right) $ is a coset decomposition
of $G_{1}$. For $\alpha \in J,$ we define $I_{\alpha }=\alpha +I.$ Therefore 
$G$ equals the disjoint union of relatively compact sets $I_{\alpha }$. We
normalize $\mu $ such that $\mu \left( I\right) =\mu \left( I_{\alpha
}\right) =1$ for all $\alpha $. Let $1\leq p,q\leq \infty $. The amalgam
space $\left( L^{p},\ell ^{q}\right) \left( G\right) =\left( L^{p},\ell
^{q}\right) $ is a Banach space%
\begin{equation*}
\left\{ f\in L_{loc}^{p}\left( G\right) :\left\Vert f\right\Vert
_{pq}<\infty \right\} ,
\end{equation*}%
where%
\begin{eqnarray}
\left\Vert f\right\Vert _{pq} &=&\left[ \tsum\limits_{\alpha \in
J}\left\Vert f\right\Vert _{L^{p}(I_{\alpha })}^{q}\right] ^{1/q}\text{\ if }%
1\leq p,q<\infty ,  \label{Normpq2} \\
\left\Vert f\right\Vert _{\infty q} &=&\left[ \tsum\limits_{\alpha \in J}%
\underset{x\in I_{\alpha }}{\sup }\left\vert f(x)\right\vert ^{q}\right]
^{1/q}\text{ if }p=\infty ,\text{ }1\leq q<\infty ,  \notag \\
\left\Vert f\right\Vert _{p\infty } &=&\underset{\alpha \in J}{\sup }%
\left\Vert f\right\Vert _{L^{p}(I_{\alpha })}\text{ if }1\leq p<\infty \text{%
, }q=\infty .  \notag
\end{eqnarray}%
If $G=%
\mathbb{R}
$, then we have $J=%
\mathbb{Z}
$, $I_{\alpha }=\left[ \alpha ,\alpha +1\right) $ and (\ref{Normpq2})
becomes (\ref{Normpq}).

Throughout this paper, $G$ will denote a locally compact abelian group with
Haar measure and $J$ and $I_{\alpha }$ will define as above. Moreover, we
assume that $1\leq p,q,r<\infty $ and every weights we used are Beurling's
weight functions on $G$.

\section{\textbf{The Space }$A_{\protect\vartheta _{1},\protect\vartheta %
_{2}}^{p,1,q,r}\left( G\right) $}

Now, we define the vector space $A_{\vartheta _{1},\vartheta
_{2}}^{p,1,q,r}\left( G\right) =\left\{ f\in \left( L_{\vartheta
_{1}}^{p},\ell ^{1}\right) :\widehat{f}\in \left( L_{\vartheta
_{2}}^{q},\ell ^{r}\right) \right\} $ and equip with the norm%
\begin{equation*}
\left\Vert f\right\Vert _{\vartheta _{1},\vartheta
_{2}}^{p,1,q,r}=\left\Vert f\right\Vert _{p1,\vartheta _{1}}+\left\Vert 
\widehat{f}\right\Vert _{qr,\vartheta _{2}}
\end{equation*}%
for $f\in A_{\vartheta _{1},\vartheta _{2}}^{p,1,q,r}\left( G\right) $. It
is note that, since $\left( L_{\vartheta _{1}}^{p},\ell ^{1}\right) $ is a
subspace of $L^{1}\left( G\right) ,$ the Fourier transforms of the functions
in $\left( L_{\vartheta _{1}}^{p},\ell ^{1}\right) $ are well-defined.

The proof of the following theorem is clear, see \cite{Ayd1}.

\begin{theorem}
\label{obvious}Let $1\leq p,q<\infty $. Let $\left( f_{n}\right) _{n\in 
\mathbb{N}
}$ be a sequence in $\left( L_{\vartheta _{1}}^{p},\ell ^{q}\right) $ and $%
\left\Vert f_{n}-f\right\Vert _{pq,\vartheta _{1}}\longrightarrow 0$ as $%
n\longrightarrow \infty $, where $f\in \left( L_{\vartheta _{1}}^{p},\ell
^{q}\right) $. Then $\left( f_{n}\right) _{n\in 
\mathbb{N}
}$ has a subsequence which converges pointwise almost everywhere to $f$.
\end{theorem}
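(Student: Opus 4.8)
The plan is to mimic the classical Riesz–Fischer argument: from norm convergence I would extract a rapidly convergent subsequence and then build a pointwise majorant whose finiteness forces almost everywhere convergence. Concretely, since $\left\Vert f_n - f\right\Vert_{pq,\vartheta_1}\to 0$, I would first choose indices $n_1 < n_2 < \cdots$ with $\left\Vert f_{n_{k+1}} - f_{n_k}\right\Vert_{pq,\vartheta_1} \le 2^{-k}$ for every $k$, which is possible because a convergent sequence is Cauchy.

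Next I would form the telescoping majorant
\begin{equation*}
g = \left\vert f_{n_1}\right\vert + \sum_{k=1}^{\infty} \left\vert f_{n_{k+1}} - f_{n_k}\right\vert, \qquad g_N = \left\vert f_{n_1}\right\vert + \sum_{k=1}^{N} \left\vert f_{n_{k+1}} - f_{n_k}\right\vert.
\end{equation*}
The sequence $\left( g_N\right)$ is nondecreasing, and by the triangle inequality $\left\Vert g_N\right\Vert_{pq,\vartheta_1} \le \left\Vert f_{n_1}\right\Vert_{pq,\vartheta_1} + \sum_{k=1}^{N} 2^{-k} \le \left\Vert f_{n_1}\right\Vert_{pq,\vartheta_1} + 1$. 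The key step is to pass to the limit inside the amalgam norm: since the local weighted $L^p$ norm on each $I_\alpha$ and the global $\ell^q$ summation over $J$ both obey the monotone convergence theorem, I would conclude $\left\Vert g\right\Vert_{pq,\vartheta_1} \le \left\Vert f_{n_1}\right\Vert_{pq,\vartheta_1} + 1 < \infty$, so $g\in \left( L_{\vartheta_1}^{p},\ell^{q}\right)$. Because the amalgam space embeds continuously into $L_{loc}^{1}\left( G\right)$, the function $g$ is finite almost everywhere; hence the series defining $g$ converges absolutely a.e., so the telescoping series $\sum_{k} \left( f_{n_{k+1}} - f_{n_k}\right)$ converges a.e., and therefore $\left( f_{n_k}\right)$ converges pointwise a.e. to some measurable function $\widetilde{f}$.

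It remains to identify $\widetilde{f}$ with $f$. Here I would use that for each fixed $\alpha\in J$ one has $\left\Vert f_{n_k} - f\right\Vert_{L^p_{\vartheta_1}(I_\alpha)} \le \left\Vert f_{n_k} - f\right\Vert_{pq,\vartheta_1}\to 0$; since $\vartheta_1$ is equivalent to a continuous weight and $I_\alpha$ is relatively compact, $\vartheta_1$ is bounded above and below on $I_\alpha$, so $f_{n_k}\to f$ in the ordinary $L^p(I_\alpha)$. Classical $L^p$ convergence on $I_\alpha$ admits a further subsequence converging a.e. to $f$, which must agree with $\widetilde{f}$ a.e. on $I_\alpha$ by uniqueness of pointwise limits. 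As $G = \bigsqcup_{\alpha\in J} I_\alpha$, this yields $\widetilde{f} = f$ a.e. on $G$, completing the argument.

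The main obstacle I anticipate is the interchange of the limit with the amalgam norm, that is, justifying $\lim_N \left\Vert g_N\right\Vert_{pq,\vartheta_1} = \left\Vert g\right\Vert_{pq,\vartheta_1}$ (the Fatou/monotone-convergence property of the weighted amalgam norm); once that is in place the remainder is routine. An alternative to the majorant construction would be to extract, for each $\alpha$ in the countable index set $J$, a subsequence converging a.e. on $I_\alpha$ and then diagonalize, but the telescoping-series route has the advantage of producing a single a.e. statement on all of $G$ at once, avoiding a separate diagonal argument.
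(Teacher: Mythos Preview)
Your argument is correct and is the standard Riesz--Fischer extraction adapted to the amalgam setting: the geometrically fast subsequence, the monotone-convergence passage through first the local $L^{p}_{\vartheta_1}(I_\alpha)$ norm and then the $\ell^{q}(J)$ summation, and the identification of the a.e.\ limit with $f$ on each $I_\alpha$ all go through as you describe. The paper, however, does not prove this theorem at all; it merely states that the proof is ``clear'' and refers to \cite{Ayd1}. So there is no in-text argument to compare against, and your self-contained treatment supplies exactly what the citation stands in for. The one place you flag as the potential obstacle---the Fatou/monotone-convergence property of $\left\Vert\,\cdot\,\right\Vert_{pq,\vartheta_1}$---is indeed valid, since both layers of the norm are built from integrals of nonnegative increasing sequences with respect to (Haar, resp.\ counting) measure.
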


\begin{theorem}
\label{banach}The space $A_{\vartheta _{1},\vartheta _{2}}^{p,1,q,r}\left(
G\right) $ is a Banach space with respect to $\left\Vert .\right\Vert
_{\vartheta _{1},\vartheta _{2}}^{p,1,q,r}.$
\end{theorem}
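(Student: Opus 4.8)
The plan is to show completeness by the standard Cauchy-sequence argument: take a Cauchy sequence in $A_{\vartheta _{1},\vartheta _{2}}^{p,1,q,r}\left( G\right) $, produce a candidate limit using completeness of the two constituent amalgam spaces, and then verify the limit lies in the space with convergence in the combined norm. First I would let $\left( f_{n}\right) $ be Cauchy with respect to $\left\Vert .\right\Vert _{\vartheta _{1},\vartheta _{2}}^{p,1,q,r}$. Since this norm dominates both summands, $\left( f_{n}\right) $ is Cauchy in $\left( L_{\vartheta _{1}}^{p},\ell ^{1}\right) $ and simultaneously $\left( \widehat{f_{n}}\right) $ is Cauchy in $\left( L_{\vartheta _{2}}^{q},\ell ^{r}\right) $. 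Both are weighted amalgam spaces, hence Banach, so there exist $f\in \left( L_{\vartheta _{1}}^{p},\ell ^{1}\right) $ with $\left\Vert f_{n}-f\right\Vert _{p1,\vartheta _{1}}\rightarrow 0$ and $g\in \left( L_{\vartheta _{2}}^{q},\ell ^{r}\right) $ with $\left\Vert \widehat{f_{n}}-g\right\Vert _{qr,\vartheta _{2}}\rightarrow 0$.

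The crux is to identify $g$ with $\widehat{f}$, so that $f$ itself belongs to $A_{\vartheta _{1},\vartheta _{2}}^{p,1,q,r}\left( G\right) $. Here I would use Theorem \ref{obvious}: from $\left\Vert f_{n}-f\right\Vert _{p1,\vartheta _{1}}\rightarrow 0$, pass to a subsequence converging to $f$ pointwise a.e. Because $\left( L_{\vartheta _{1}}^{p},\ell ^{1}\right) \subseteq L^{1}\left( G\right) $ (with norm control $\left\Vert .\right\Vert _{1}\lesssim \left\Vert .\right\Vert _{p1,\vartheta _{1}}$), convergence in the amalgam norm forces $L^{1}$-convergence, and hence $\widehat{f_{n}}\rightarrow \widehat{f}$ uniformly, in particular pointwise everywhere. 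On the other hand, applying Theorem \ref{obvious} again to $\widehat{f_{n}}\rightarrow g$ in $\left( L_{\vartheta _{2}}^{q},\ell ^{r}\right) $ yields a further subsequence with $\widehat{f_{n}}\rightarrow g$ pointwise a.e. Matching the two pointwise limits along a common subsequence gives $g=\widehat{f}$ a.e., so $\widehat{f}\in \left( L_{\vartheta _{2}}^{q},\ell ^{r}\right) $ and therefore $f\in A_{\vartheta _{1},\vartheta _{2}}^{p,1,q,r}\left( G\right) $.

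Finally I would assemble the norm convergence: $\left\Vert f_{n}-f\right\Vert _{\vartheta _{1},\vartheta _{2}}^{p,1,q,r}=\left\Vert f_{n}-f\right\Vert _{p1,\vartheta _{1}}+\left\Vert \widehat{f_{n}}-\widehat{f}\right\Vert _{qr,\vartheta _{2}}$, and both terms tend to $0$ by construction (the second because $\widehat{f}=g$). This establishes $f_{n}\rightarrow f$ in $A_{\vartheta _{1},\vartheta _{2}}^{p,1,q,r}\left( G\right) $, completing the proof.

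I expect the main obstacle to be the identification $g=\widehat{f}$, since it requires reconciling two different modes of convergence. The clean way is to exploit that amalgam-norm convergence in $\left( L_{\vartheta _{1}}^{p},\ell ^{1}\right) $ implies $L^{1}$-convergence, which gives uniform convergence of the Fourier transforms and pins down $\widehat{f}$ as the genuine limit of $\widehat{f_{n}}$; the weaker pointwise-a.e. subsequence extracted from $\left( L_{\vartheta _{2}}^{q},\ell ^{r}\right) $ then need only agree with it on a set of full measure. Care must be taken to pass to a single subsequence along which both pointwise statements hold, but this is routine once the two limits are available.
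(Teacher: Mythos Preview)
Your proposal is correct and follows essentially the same route as the paper: split the Cauchy condition into the two amalgam components, use completeness of each, then identify $g=\widehat{f}$ by combining the uniform convergence $\widehat{f_n}\to\widehat{f}$ (coming from $L^1$-convergence via $\left\Vert .\right\Vert _{1}\leq \left\Vert .\right\Vert _{p1,\vartheta _{1}}$) with a pointwise-a.e.\ subsequence $\widehat{f_{n_k}}\to g$ obtained from Theorem~\ref{obvious}. The only minor redundancy is your first invocation of Theorem~\ref{obvious} to extract a subsequence with $f_n\to f$ a.e., which you never actually use; the paper skips this and works directly with the Fourier transforms.
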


\begin{proof}
Assume that $\left( f_{n}\right) _{n\in 
\mathbb{N}
}$ is a Cauchy sequence in $A_{\vartheta _{1},\vartheta
_{2}}^{p,1,q,r}\left( G\right) $. Thus given $\varepsilon >0,$\ there is an $%
n_{1}\in 
\mathbb{N}
$\ such that for all $n,m\geqslant n_{1}$\ implies%
\begin{eqnarray*}
\left\Vert f_{n}-f_{m}\right\Vert _{\vartheta _{1},\vartheta _{2}}^{p,1,q,r}
&=&\left\Vert f_{n}-f_{m}\right\Vert _{p1,\vartheta _{1}}+\left\Vert 
\widehat{f_{n}-f_{m}}\right\Vert _{qr,\vartheta _{2}} \\
&=&\left\Vert f_{n}-f_{m}\right\Vert _{p1,\vartheta _{1}}+\left\Vert 
\widehat{f_{n}}-\widehat{f_{m}}\right\Vert _{qr,\vartheta _{2}}<\varepsilon .
\end{eqnarray*}%
Therefore, $\left( f_{n}\right) _{n\in IN}\subset \left( L_{\vartheta
_{1}}^{p},\ell ^{1}\right) $\ and $\left( f_{n}\right) _{n\in IN}\subset
\left( L_{\vartheta _{2}}^{q},\ell ^{r}\right) $\ are Cauchy sequences with
respect to $\left\Vert .\right\Vert _{p1,\vartheta _{1}}$\ and $\left\Vert
.\right\Vert _{qr,\vartheta _{2}}$, respectively. Since the spaces $\left(
\left( L_{\vartheta _{1}}^{p},\ell ^{1}\right) ,\left\Vert .\right\Vert
_{p1,\vartheta _{1}}\right) $\ and $\left( \left( L_{\vartheta
_{2}}^{q},\ell ^{r}\right) ,\left\Vert .\right\Vert _{qr,\vartheta
_{2}}\right) $\ are two Banach spaces, there exist $f\in \left( L_{\vartheta
_{1}}^{p},\ell ^{1}\right) $ and $g\in \left( L_{\vartheta _{2}}^{q},\ell
^{r}\right) $ such that $\left\Vert f_{n}-f\right\Vert _{p1,\vartheta
_{1}}\longrightarrow 0$, $\left\Vert \widehat{f_{n}}-g\right\Vert
_{qr,\vartheta _{2}}\longrightarrow 0$. If we use the inequality $\left\Vert
.\right\Vert _{1,\vartheta _{1}}\leq \left\Vert .\right\Vert _{p1,\vartheta
_{1}},$ then we get $\left\Vert f_{n}-f\right\Vert _{1,\vartheta
_{1}}\longrightarrow 0.$ By Theorem \ref{obvious}, there is a subsequence $%
\left\{ \widehat{f_{n_{k}}}\right\} _{k\in IN}$ of $\left\{ \widehat{f_{n}}%
\right\} _{n\in IN}$ such that $\widehat{f_{n_{k}}}\longrightarrow g$ a.e.
Since $\vartheta _{1}$ is a Beurling's weight, we have%
\begin{equation*}
\left\Vert \widehat{f_{n}}-\widehat{f}\right\Vert _{\infty }\leq \left\Vert
f_{n}-f\right\Vert _{1}\leq \left\Vert f_{n}-f\right\Vert _{1,\vartheta
_{1}}.
\end{equation*}%
This follows that $\left\Vert \widehat{f_{n}}-\widehat{f}\right\Vert
_{\infty }\longrightarrow 0.$ Moreover, we get%
\begin{equation*}
\left\Vert \widehat{f_{n_{k}}}-\widehat{f}\right\Vert _{\infty
}=\sup_{k}\left\vert f_{n_{k}}\left( x\right) -f\left( x\right) \right\vert
\leq \left\Vert \widehat{f_{n}}-\widehat{f}\right\Vert _{\infty
}\longrightarrow 0.
\end{equation*}

Therefore, we have $\widehat{f}=g.$ This yields that%
\begin{equation*}
\left\Vert f_{n}-f\right\Vert _{\vartheta _{1},\vartheta
_{2}}^{p,1,q,r}=\left\Vert f_{n}-f\right\Vert _{p1,\vartheta
_{1}}+\left\Vert \widehat{f_{n}}-\widehat{f}\right\Vert _{qr,\vartheta
_{2}}\longrightarrow 0
\end{equation*}%
and $f\in A_{\vartheta _{1},\vartheta _{2}}^{p,1,q,r}\left( G\right) .$ That
is the desired result.
\end{proof}

\begin{theorem}
\label{Banach algebra}If $1<p,q,r<\infty ,$ then the space $\left(
A_{\vartheta _{1},\vartheta _{2}}^{p,1,q,r}\left( G\right) ,\left\Vert
.\right\Vert _{\vartheta _{1},\vartheta _{2}}^{p,1,q,r}\right) $ is a Banach
algebra with respect to convolution.
\end{theorem}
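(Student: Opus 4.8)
The plan is to combine the completeness already established in Theorem \ref{banach} with two facts: that $A_{\vartheta_1,\vartheta_2}^{p,1,q,r}(G)$ is closed under convolution, and that $\left\Vert \cdot\right\Vert_{\vartheta_1,\vartheta_2}^{p,1,q,r}$ is submultiplicative up to a constant; after a harmless renorming this delivers a Banach algebra. Since the norm splits as $\left\Vert f\right\Vert_{\vartheta_1,\vartheta_2}^{p,1,q,r}=\left\Vert f\right\Vert_{p1,\vartheta_1}+\left\Vert \widehat{f}\right\Vert_{qr,\vartheta_2}$, I would take $f,g\in A_{\vartheta_1,\vartheta_2}^{p,1,q,r}(G)$ and estimate the two summands of $\left\Vert f\ast g\right\Vert_{\vartheta_1,\vartheta_2}^{p,1,q,r}$ separately, using at the decisive moment the identity $\widehat{f\ast g}=\widehat{f}\,\widehat{g}$; this is legitimate because $\left(L_{\vartheta_1}^p,\ell^1\right)\subset L^1(G)$ ensures $f,g,f\ast g\in L^1(G)$, so the convolution theorem applies.

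For the first summand I would invoke that $\left(L_{\vartheta_1}^p,\ell^1\right)$ is itself a Banach algebra under convolution, i.e. there is a constant $C>0$ with $\left\Vert f\ast g\right\Vert_{p1,\vartheta_1}\le C\left\Vert f\right\Vert_{p1,\vartheta_1}\left\Vert g\right\Vert_{p1,\vartheta_1}$. This is the heart of the matter and I expect it to be the main obstacle. Its verification rests on decomposing $f=\sum_{\alpha\in J}f\chi_{I_\alpha}$ and $g=\sum_{\beta\in J}g\chi_{I_\beta}$, noting that $(f\chi_{I_\alpha})\ast(g\chi_{I_\beta})$ is supported essentially in $I_{\alpha+\beta}$, applying the local Young inequality $\left\Vert u\ast v\right\Vert_{L^p}\le\left\Vert u\right\Vert_{L^p}\left\Vert v\right\Vert_{L^1}$ together with $\left\Vert v\right\Vert_{L^1(I_\beta)}\le\left\Vert v\right\Vert_{L^p(I_\beta)}$ (valid since $\mu(I_\beta)=1$), and finally summing via $\ell^1\ast\ell^1\subseteq\ell^1$. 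The submultiplicativity $\vartheta_1(x+y)\le\vartheta_1(x)\vartheta_1(y)$ of the Beurling weight is precisely what lets the weight distribute across the convolution so that the weighted local norms factor. If this statement is available in the cited literature (\cite{A2},\cite{Unal},\cite{Ayd1}), I would simply quote it rather than reprove it here.

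For the second summand I would exploit solidity of the amalgam space. Because $g\in L^1(G)$, its transform satisfies $\widehat{g}\in C_0(G)\subset L^\infty(G)$ with $\left\Vert\widehat{g}\right\Vert_\infty\le\left\Vert g\right\Vert_1$. The pointwise bound $\left\vert\widehat{f}\,\widehat{g}\right\vert\le\left\Vert\widehat{g}\right\Vert_\infty\left\vert\widehat{f}\right\vert$, combined with the fact that $\left(L_{\vartheta_2}^q,\ell^r\right)$ is a solid BF-space (equivalently an $L^\infty$-module), then gives $\widehat{f}\,\widehat{g}\in\left(L_{\vartheta_2}^q,\ell^r\right)$ and $\left\Vert\widehat{f}\,\widehat{g}\right\Vert_{qr,\vartheta_2}\le\left\Vert\widehat{g}\right\Vert_\infty\left\Vert\widehat{f}\right\Vert_{qr,\vartheta_2}$. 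Chaining $\left\Vert g\right\Vert_1\le\left\Vert g\right\Vert_{1,\vartheta_1}\le\left\Vert g\right\Vert_{p1,\vartheta_1}$ (the last inequality again from $\mu(I_\beta)=1$ and H\"older on each $I_\beta$) converts this into a bound by the full norm of $g$.

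Adding the two estimates yields $\left\Vert f\ast g\right\Vert_{\vartheta_1,\vartheta_2}^{p,1,q,r}\le C'\left\Vert f\right\Vert_{\vartheta_1,\vartheta_2}^{p,1,q,r}\left\Vert g\right\Vert_{\vartheta_1,\vartheta_2}^{p,1,q,r}$, which together with the completeness from Theorem \ref{banach} finishes the argument, renorming by $C'$ if one wishes a genuinely submultiplicative norm. Associativity and bilinearity of the product are inherited from convolution on $L^1(G)$ and need no separate treatment, so the only real work is the weighted amalgam convolution inequality flagged above.
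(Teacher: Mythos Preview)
Your proposal is correct and follows essentially the same route as the paper: invoke Theorem~\ref{banach} for completeness, use that $\left(L_{\vartheta_1}^{p},\ell^{1}\right)$ is a Banach convolution algebra (the paper cites \cite{Squ} for this rather than reproving it) to control $\left\Vert f\ast g\right\Vert_{p1,\vartheta_1}$, and bound $\left\Vert \widehat{f\ast g}\right\Vert_{qr,\vartheta_2}$ via the pointwise estimate $|\widehat{f}\,\widehat{g}|\le \|\widehat{f}\|_\infty|\widehat{g}|$ together with solidity of $\left(L_{\vartheta_2}^{q},\ell^{r}\right)$ and $\|\widehat{f}\|_\infty\le\|f\|_{p1,\vartheta_1}$. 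The only cosmetic difference is that the paper pulls out $\|\widehat{f}\|_\infty$ where you pull out $\|\widehat{g}\|_\infty$, which is immaterial by symmetry.
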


\begin{proof}
It is note that the space $\left( A_{\vartheta _{1},\vartheta
_{2}}^{p,1,q,r}\left( G\right) ,\left\Vert .\right\Vert _{\vartheta
_{1},\vartheta _{2}}^{p,1,q,r}\right) $ is a Banach space by the Theorem \ref%
{banach}. Now, let $f,g\in A_{\vartheta _{1},\vartheta _{2}}^{p,1,q,r}\left(
G\right) $ be given. Thus, we have $f,g\in \left( L_{\vartheta
_{1}}^{p},\ell ^{1}\right) $ and $\widehat{f},\widehat{g}\in \left(
L_{\vartheta _{2}}^{q},\ell ^{r}\right) .$ Since $\left( L_{\vartheta
_{1}}^{p},\ell ^{1}\right) $ is a Banach algebra under convolution (see \cite%
{Squ}), we have $f\ast g\in \left( L_{\vartheta _{1}}^{p},\ell ^{1}\right) $
and there exists $C\geq 1$ such that%
\begin{equation}
\left\Vert f\ast g\right\Vert _{p1,\vartheta _{1}}\leq C\left\Vert
f\right\Vert _{p1,\vartheta _{1}}\left\Vert g\right\Vert _{p1,\vartheta _{1}}%
\text{.}  \label{balgebra1}
\end{equation}

If we consider the inequality%
\begin{equation*}
\left\vert \widehat{\left( f\ast g\right) }\left( x\right) \right\vert
=\left\vert \widehat{f}\left( x\right) \right\vert \left\vert \widehat{g}%
\left( x\right) \right\vert \leq \left\Vert \widehat{f}\right\Vert _{\infty
}\left\vert \widehat{g}\left( x\right) \right\vert ,
\end{equation*}%
then we have%
\begin{equation}
\left\Vert \widehat{f\ast g}\right\Vert _{qr,\vartheta _{2}}\leq \left\Vert 
\widehat{f}\right\Vert _{\infty }\left\Vert \widehat{g}\right\Vert
_{qr,\vartheta _{2}}\leq \left\Vert f\right\Vert _{p1,\vartheta
_{1}}\left\Vert \widehat{g}\right\Vert _{qr,\vartheta _{2}}
\label{balgebra3}
\end{equation}%
and $\widehat{f\ast g}\in \left( L_{\vartheta _{2}}^{q},\ell ^{r}\right) .$
Therefore, we have $f\ast g\in A_{\vartheta _{1},\vartheta
_{2}}^{p,1,q,r}\left( G\right) .$ This follows by (\ref{balgebra1}) and (\ref%
{balgebra3}) that%
\begin{eqnarray*}
\left\Vert f\ast g\right\Vert _{\vartheta _{1},\vartheta _{2}}^{p,1,q,r}
&=&\left\Vert f\ast g\right\Vert _{p1,\vartheta _{1}}+\left\Vert \widehat{%
f\ast g}\right\Vert _{qr,\vartheta _{2}} \\
&\leq &C\left\Vert f\right\Vert _{p1,\vartheta _{1}}\left( \left\Vert
g\right\Vert _{p1,\vartheta _{1}}+\left\Vert \widehat{g}\right\Vert
_{qr,\vartheta _{2}}\right) \\
&\leq &C\left\Vert f\right\Vert _{\vartheta _{1},\vartheta
_{2}}^{p,1,q,r}\left\Vert g\right\Vert _{\vartheta _{1},\vartheta
_{2}}^{p,1,q,r}.
\end{eqnarray*}
\end{proof}

\begin{theorem}
The space $\left( A_{\vartheta _{1},\vartheta _{2}}^{p,1,q,r}\left( G\right)
,\left\Vert .\right\Vert _{\vartheta _{1},\vartheta _{2}}^{p,1,q,r}\right) $
is a BF-space.
\end{theorem}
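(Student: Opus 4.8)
The plan is to verify directly the two defining features of a BF-space. By Theorem \ref{banach}, the space $\left( A_{\vartheta _{1},\vartheta _{2}}^{p,1,q,r}\left( G\right) ,\left\Vert .\right\Vert _{\vartheta _{1},\vartheta _{2}}^{p,1,q,r}\right) $ is already a Banach space of measurable functions, so the only point left to establish is the continuous embedding into $L_{loc}^{1}\left( G\right) $: for each compact $K\subset G$ one must produce a constant $C_{K}>0$ with $\left\Vert f\cdot \chi _{K}\right\Vert _{L^{1}}\leq C_{K}\left\Vert f\right\Vert _{\vartheta _{1},\vartheta _{2}}^{p,1,q,r}$ for all $f$ in the space. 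Since $\left\Vert f\right\Vert _{\vartheta _{1},\vartheta _{2}}^{p,1,q,r}=\left\Vert f\right\Vert _{p1,\vartheta _{1}}+\left\Vert \widehat{f}\right\Vert _{qr,\vartheta _{2}}\geq \left\Vert f\right\Vert _{p1,\vartheta _{1}}$, it is enough to prove the local estimate with $\left\Vert f\right\Vert _{p1,\vartheta _{1}}$ on the right, that is, to show that the weighted amalgam $\left( L_{\vartheta _{1}}^{p},\ell ^{1}\right) $ is itself continuously embedded in $L_{loc}^{1}\left( G\right) $.

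To do this I would fix a compact set $K$ and observe that, because the relatively compact sets $\left\{ I_{\alpha }\right\} _{\alpha \in J}$ are pairwise disjoint and cover $G$, only finitely many of them meet $K$; write $F=\left\{ \alpha \in J:I_{\alpha }\cap K\neq \emptyset \right\} $, a finite set. On each $I_{\alpha }$ I would apply H\"{o}lder's inequality with conjugate exponents $p,p^{\prime }$ together with the normalization $\mu \left( I_{\alpha }\right) =1$ to obtain
\begin{equation*}
\left\Vert f\cdot \chi _{I_{\alpha }}\right\Vert _{L^{1}}\leq \left\Vert f\right\Vert _{L^{p}(I_{\alpha })}\mu \left( I_{\alpha }\right) ^{1/p^{\prime }}=\left\Vert f\right\Vert _{L^{p}(I_{\alpha })}.
\end{equation*}
Because every weight here is a Beurling's weight, we have $\vartheta _{1}\geq 1$ pointwise, so dropping the weight only decreases the norm and gives $\left\Vert f\right\Vert _{L^{p}(I_{\alpha })}\leq \left\Vert f\vartheta _{1}\right\Vert _{L^{p}(I_{\alpha })}=\left\Vert f\right\Vert _{L_{\vartheta _{1}}^{p}(I_{\alpha })}$.

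Summing this bound over the finite set $F$ and then enlarging the sum to all of $J$ yields
\begin{equation*}
\left\Vert f\cdot \chi _{K}\right\Vert _{L^{1}}\leq \sum_{\alpha \in F}\left\Vert f\cdot \chi _{I_{\alpha }}\right\Vert _{L^{1}}\leq \sum_{\alpha \in J}\left\Vert f\right\Vert _{L_{\vartheta _{1}}^{p}(I_{\alpha })}=\left\Vert f\right\Vert _{p1,\vartheta _{1}}\leq \left\Vert f\right\Vert _{\vartheta _{1},\vartheta _{2}}^{p,1,q,r},
\end{equation*}
so the embedding constant $C_{K}=1$ works, in fact uniformly in $K$. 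I do not expect any serious obstacle: this is essentially the standard argument that a (weighted) amalgam space lies inside $L_{loc}^{1}$, and the only places needing care are the finiteness of $F$, which comes from compactness of $K$ combined with the Stewart covering of $G$, and the direction of the weight inequality, which works precisely because Beurling weights satisfy $\vartheta _{1}\geq 1$. The fact that the global exponent equals $1$ makes the concluding summation immediate; for a general global exponent $q$ one would instead use H\"{o}lder's inequality in the sequence variable as well, but that refinement is unnecessary here.
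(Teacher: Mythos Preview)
Your proof is correct and follows essentially the same route as the paper: both establish the uniform estimate $\int_{K}\lvert f\rvert\,dx\leq \lVert f\rVert_{p1,\vartheta_{1}}\leq \lVert f\rVert_{\vartheta_{1},\vartheta_{2}}^{p,1,q,r}$, with the paper simply quoting the known inclusion $\left( L_{\vartheta _{1}}^{p},\ell ^{1}\right)\hookrightarrow L_{\vartheta_{1}}^{1}\hookrightarrow L^{1}$ while you spell out the underlying H\"older-on-each-cell argument explicitly.
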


\begin{proof}
Assume that $K\subset G$ is a compact subset and let $f\in A_{\vartheta
_{1},\vartheta _{2}}^{p,1,q,r}\left( G\right) $. Since $\vartheta _{1}$ is a
Beurling's weight function and the space $L_{\vartheta _{1}}^{p}\left(
G\right) $ is continuously embedded in $L_{\vartheta _{1}}^{1}\left(
G\right) $, we have%
\begin{eqnarray*}
\int\limits_{K}\left\vert f(x)\right\vert dx &\leq &\left\Vert f\right\Vert
_{1}\leq \left\Vert f\right\Vert _{1,\vartheta _{1}} \\
&\leq &\left\Vert f\right\Vert _{p1,\vartheta _{1}}\leq \left\Vert
f\right\Vert _{\vartheta _{1},\vartheta _{2}}^{p,1,q,r}\text{.}
\end{eqnarray*}%
This completes the proof.
\end{proof}

\begin{theorem}
\label{statement}The following statements hold.

\begin{enumerate}
\item[\textit{(i)}] The space $A_{\vartheta _{1},\vartheta
_{2}}^{p,1,q,r}\left( G\right) $ is translation invariant and for every $%
f\in A_{\vartheta _{1},\vartheta _{2}}^{p,1,q,r}\left( G\right) $ and $y\in
G $ the inequality $C_{1}\left( f\right) \vartheta _{1}\left( y\right) \leq
\left\Vert T_{y}f\right\Vert _{\vartheta _{1},\vartheta _{2}}^{p,1,q,r}\leq
C_{2}\left( f\right) \vartheta _{1}\left( y\right) $ holds where $%
C_{1}\left( f\right) >0$ and $C_{2}\left( f\right) =\left\Vert f\right\Vert
_{\vartheta _{1},\vartheta _{2}}^{p,1,q,r}$.

\item[\textit{(ii)}] The map $y\longrightarrow T_{y}f$ is continuous from $G$
into $A_{\vartheta _{1},\vartheta _{2}}^{p,1,q,r}\left( G\right) $ for every 
$f\in A_{\vartheta _{1},\vartheta _{2}}^{p,1,q,r}\left( G\right) .$
\end{enumerate}
\end{theorem}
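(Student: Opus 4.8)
The plan is to treat the two summands of the norm $\|\cdot\|_{\vartheta_1,\vartheta_2}^{p,1,q,r}$ separately, exploiting that the Fourier side is essentially unaffected by translation. For (i) I would first record the transform identity: since $\widehat{T_y f}$ is obtained from $\widehat{f}$ by multiplication with the character $t\mapsto\langle y,t\rangle$, which has modulus one, one has $\bigl|\widehat{T_y f}(t)\bigr|=\bigl|\widehat{f}(t)\bigr|$ pointwise, so that $\|\widehat{T_y f}\|_{qr,\vartheta_2}=\|\widehat{f}\|_{qr,\vartheta_2}$ because the weighted amalgam norm depends only on the modulus of its argument. This already shows $\widehat{T_y f}\in(L^q_{\vartheta_2},\ell^r)$; combined with the bound on the first summand below it gives $T_y f\in A_{\vartheta_1,\vartheta_2}^{p,1,q,r}(G)$, i.e. translation invariance.

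For the upper bound on the first summand I would use the submultiplicativity of the Beurling weight in the form $\vartheta_1(x)\le\vartheta_1(x-y)\vartheta_1(y)$. Writing $\|T_y f\|_{p1,\vartheta_1}=\|(T_y f)\vartheta_1\|_{p1}$, this yields $|(T_y f)(x)\vartheta_1(x)|\le\vartheta_1(y)\,|T_y(f\vartheta_1)(x)|$, and the translation invariance of the unweighted amalgam norm then gives $\|T_y f\|_{p1,\vartheta_1}\le\vartheta_1(y)\|f\|_{p1,\vartheta_1}$. Adding the Fourier part and using $\vartheta_1(y)\ge1$ produces $\|T_y f\|_{\vartheta_1,\vartheta_2}^{p,1,q,r}\le\vartheta_1(y)\|f\|_{\vartheta_1,\vartheta_2}^{p,1,q,r}$, so $C_2(f)=\|f\|_{\vartheta_1,\vartheta_2}^{p,1,q,r}$. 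The one delicate input here is the invariance of the unweighted amalgam norm under translation, which I would invoke as part of the underlying setup.

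For the lower bound, the point is that a plain application of submultiplicativity to $f=T_{-y}(T_y f)$ only yields $\|T_y f\|\ge\vartheta_1(-y)^{-1}\|f\|$, which has the wrong growth in $y$; so instead I would use the reverse estimate $\vartheta_1(u+y)\ge\vartheta_1(y)/\vartheta_1(-u)$, again a consequence of submultiplicativity. After the change of variables $u=x-y$ this gives $\|T_y f\|_{p1,\vartheta_1}\ge\vartheta_1(y)\,\bigl\||f|\,\vartheta_1(-\cdot)^{-1}\bigr\|_{p1}$, so one may take $C_1(f)=\bigl\||f|\,\vartheta_1(-\cdot)^{-1}\bigr\|_{p1}$; this is strictly positive for $f\ne0$ and finite because $\vartheta_1\ge1$ forces $|f|\,\vartheta_1(-\cdot)^{-1}\le|f|$. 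I expect this lower bound to be the main obstacle, precisely because the naive submultiplicative argument fails and the factor $\vartheta_1(y)$ must be extracted by hand.

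For (ii) I would reduce to continuity at the origin via $T_y f-T_{y_0}f=T_{y_0}(T_{y-y_0}f-f)$ together with the upper bound from (i). Continuity at $0$ then splits along the two summands. For the first, $\|T_z f-f\|_{p1,\vartheta_1}\to0$ as $z\to0$ follows from the continuity of translation in $(L^p_{\vartheta_1},\ell^1)$, which I would establish by approximating $f$ by a function in $C_c(G)$ (dense since $q=1<\infty$), using uniform continuity on the compact support, and controlling the error through the local boundedness of $\vartheta_1$ near $0$. For the second, $\widehat{T_z f}-\widehat{f}=(\langle z,\cdot\rangle-1)\widehat{f}$ tends to $0$ pointwise as $z\to0$ and is dominated by $2|\widehat f|$, whose amalgam norm is finite; a dominated-convergence argument applied blockwise on each $I_\alpha$ (finite measure) and then to the resulting summable series over $\alpha$ gives $\|\widehat{T_z f}-\widehat{f}\|_{qr,\vartheta_2}\to0$. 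Combining the two yields the claimed continuity.
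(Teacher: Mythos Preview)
Your proposal is correct and follows the paper's overall architecture: split the norm into its two summands, handle the Fourier side via $|\widehat{T_yf}|=|\widehat f|$ (the paper phrases this as strong character invariance of $(L^q_{\vartheta_2},\ell^r)$), and control the amalgam side through the submultiplicativity of $\vartheta_1$. The genuine difference is in how the lower bound and the continuity are obtained. For the lower bound in (i) the paper does not compute directly; it passes through the continuous embedding $(L^p_{\vartheta_1},\ell^1)\hookrightarrow L^p_{\vartheta_1}(G)$ and then invokes Lemma~2.2 of Feichtinger--G\"urkanli to get $C\vartheta_1(y)\le\|T_yf\|_{p,\vartheta_1}$, with no explicit constant. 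Your route via $\vartheta_1(u+y)\ge\vartheta_1(y)/\vartheta_1(-u)$ is more elementary and yields the concrete value $C_1(f)=\bigl\||f|\,\vartheta_1(-\cdot)^{-1}\bigr\|_{p1}$, which the paper's method does not provide. For (ii) the paper likewise cites the continuity of translation in $(L^p_{\vartheta_1},\ell^1)$ and of modulation $t\mapsto M_t\widehat f$ in $(L^q_{\vartheta_2},\ell^r)$ as known facts from the literature, whereas you unpack both by density of $C_c(G)$ and a blockwise dominated-convergence argument. One small technical wrinkle: the unweighted amalgam norm $\|\cdot\|_{p1}$ is in general only translation invariant up to a universal constant (exact invariance holds for shifts in $J$ but not for arbitrary $y$), so your upper and lower bounds may pick up such a constant; this is harmless for the statement and the paper glosses over the same point.
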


\begin{proof}
Let $f\in A_{\vartheta _{1},\vartheta _{2}}^{p,1,q,r}\left( G\right) $.
Since $\vartheta _{1}$ is a Beurling's weight function, it is easy to see
that $T_{y}f\in \left( L_{\vartheta _{1}}^{p},\ell ^{1}\right) $ and $%
\left\Vert T_{y}f\right\Vert _{p1,\vartheta _{1}}\leq \vartheta _{1}\left(
y\right) \left\Vert f\right\Vert _{p1,\vartheta _{1}}$ for all $y\in G$.

Moreover, for $p>1,$ it is clear that $\left( L_{\vartheta _{1}}^{p},\ell
^{1}\right) $ is continuously embedded in $L_{\vartheta _{1}}^{p}\left(
G\right) ,$ see \cite{Squ}. Thus, if we consider the Lemma 2.2 in \cite%
{Feich3}, then there is a constant $C>0$ depending on $f$ such that%
\begin{equation*}
C\vartheta _{1}\left( y\right) \leq \left\Vert T_{y}f\right\Vert
_{p,\vartheta _{1}}\leq C^{\ast }\left\Vert T_{y}f\right\Vert _{p1,\vartheta
_{1}}\leq C^{\ast }\left\Vert T_{y}f\right\Vert _{\vartheta _{1},\vartheta
_{2}}^{p,1,q,r}
\end{equation*}%
for all $y\in G.$ This follows that%
\begin{equation}
C_{1}\vartheta _{1}\left( y\right) \leq \left\Vert T_{y}f\right\Vert
_{\vartheta _{1},\vartheta _{2}}^{p,1,q,r}  \label{sta1}
\end{equation}%
where $C_{1}=\frac{C}{C^{\ast }}$ depends on $f.$ It is clear that $\widehat{%
L_{y}f}=M_{-y}\widehat{f}.$ Moreover, if we consider that the weighted
amalgam space $\left( L_{\vartheta _{2}}^{q},\ell ^{r}\right) $ is strongly
character invariant and the function $t\longrightarrow M_{t}f$ is continuous
from $\widehat{G}$ into $\left( L_{\vartheta _{2}}^{q},\ell ^{r}\right) $
(see \cite{A1}, \cite{Pan}, \cite{Sag}), then we have%
\begin{equation*}
\left\Vert \widehat{T_{y}f}\right\Vert _{qr,\vartheta _{2}}=\left\Vert M_{-y}%
\widehat{f}\right\Vert _{qr,\vartheta _{2}}=\left\Vert \widehat{f}%
\right\Vert _{qr,\vartheta _{2}}<\infty .
\end{equation*}

This follows that $T_{y}f\in A_{\vartheta _{1},\vartheta
_{2}}^{p,1,q,r}\left( G\right) .$ Since $\vartheta _{1}\geq 1,$ we get%
\begin{eqnarray}
\left\Vert T_{y}f\right\Vert _{\vartheta _{1},\vartheta _{2}}^{p,1,q,r}
&\leq &\vartheta _{1}\left( y\right) \left\Vert f\right\Vert _{p1,\vartheta
_{1}}+\left\Vert \widehat{f}\right\Vert _{qr,\vartheta _{2}}  \notag \\
&\leq &\vartheta _{1}\left( y\right) \left\Vert f\right\Vert _{\vartheta
_{1},\vartheta _{2}}^{p,1,q,r}  \label{sta2}
\end{eqnarray}%
for all $y\in G.$ This completes \textit{(i)} by (\ref{sta1}) and (\ref{sta2}%
)\textit{. }It is obvious that $T_{y}$ is linear. For any $\varepsilon >0,$
there is a neighbourhood $V_{1}$ of the unit element of $G$ such that%
\begin{equation}
\left\Vert T_{y}f-f\right\Vert _{p1,\vartheta _{1}}<\frac{\varepsilon }{2}
\label{trans1}
\end{equation}%
for all $y\in V_{1}.$ Also, there is a neighbourhood $V_{2}$ of the unit
element of $G$ such that%
\begin{equation}
\left\Vert \widehat{T_{y}f}-\widehat{f}\right\Vert _{qr,\vartheta
_{2}}=\left\Vert M_{-y}\widehat{f}-\widehat{f}\right\Vert _{qr,\vartheta
_{2}}<\frac{\varepsilon }{2}  \label{trans2}
\end{equation}%
for all $y\in V_{2}.$ Now, let us denote $U=V_{1}\cap V_{2}.$ By (\ref%
{trans1}) and (\ref{trans2}), we have%
\begin{equation*}
\left\Vert T_{y}f-f\right\Vert _{\vartheta _{1},\vartheta
_{2}}^{p,1,q,r}=\left\Vert T_{y}f-f\right\Vert _{p1,\vartheta
_{1}}+\left\Vert \widehat{T_{y}f-f}\right\Vert _{qr,\vartheta
_{2}}<\varepsilon
\end{equation*}%
for all $y\in U.$ That is the desired result.
\end{proof}

\begin{theorem}
\label{segal}Assume that $\vartheta _{1}$ satisfies (BD) condition. Then the
space $A_{\vartheta _{1},\vartheta _{2}}^{1,1,q,r}\left( G\right) $ is a $%
S_{\vartheta _{1}}$ algebra.
\end{theorem}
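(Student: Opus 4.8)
The plan is to check that $S_{\vartheta_1}:=A_{\vartheta_1,\vartheta_2}^{1,1,q,r}(G)$, normed by $\|\cdot\|_{\vartheta_1,\vartheta_2}^{1,1,q,r}$, satisfies the five axioms (S1)--(S5) relative to the Beurling algebra $L_{\vartheta_1}^1(G)$. The first observation is that for $p=1$ one has $\left(L_{\vartheta_1}^1,\ell^1\right)=L_{\vartheta_1}^1(G)$ with $\|f\|_{11,\vartheta_1}=\|f\|_{1,\vartheta_1}$, since summing the local $L_{\vartheta_1}^1$-norms over the partition $\{I_\alpha\}$ recovers the global norm; hence $S_{\vartheta_1}$ is genuinely a subspace of $L_{\vartheta_1}^1(G)$ and $\|f\|_{\vartheta_1,\vartheta_2}^{1,1,q,r}=\|f\|_{1,\vartheta_1}+\|\widehat f\|_{qr,\vartheta_2}$.

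Four of the axioms are inherited from earlier results. For (S2) the proof of Theorem \ref{Banach algebra} applies verbatim with $p=1$: it only uses that $\left(L_{\vartheta_1}^1,\ell^1\right)=L_{\vartheta_1}^1(G)$ is a convolution algebra (the Beurling algebra) together with the estimate $|\widehat{f\ast g}|\le\|\widehat f\|_\infty|\widehat g|$ behind (\ref{balgebra3}), neither of which needs $p>1$; with Theorem \ref{banach} this makes $S_{\vartheta_1}$ a Banach convolution algebra, translation invariance being part of Theorem \ref{statement}. Condition (S3) is precisely the bound (\ref{sta2}), and (S4) is the continuity statement in Theorem \ref{statement}(ii). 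Condition (S5) is immediate, as $\|f\|_{1,\vartheta_1}=\|f\|_{11,\vartheta_1}\le\|f\|_{\vartheta_1,\vartheta_2}^{1,1,q,r}$.

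The real content is the density axiom (S1), and this is where the (BD) hypothesis on $\vartheta_1$ is used. First I would note that $S_{\vartheta_1}$ is a convolution ideal of $L_{\vartheta_1}^1(G)$: for $h\in L_{\vartheta_1}^1(G)$ and $g\in S_{\vartheta_1}$ one has $h\ast g\in L_{\vartheta_1}^1(G)$, and since $\widehat{h\ast g}=\widehat h\,\widehat g$ the estimate $\|\widehat{h\ast g}\|_{qr,\vartheta_2}\le\|\widehat h\|_\infty\|\widehat g\|_{qr,\vartheta_2}\le\|h\|_{1,\vartheta_1}\|\widehat g\|_{qr,\vartheta_2}$ gives $h\ast g\in S_{\vartheta_1}$. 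By the Beurling--Domar condition (see \cite{Domar}), $L_{\vartheta_1}^1(G)$ admits an approximate identity $(e_\lambda)$ whose elements have compactly supported Fourier transforms. Each $e_\lambda$ already belongs to $S_{\vartheta_1}$: a continuous $\widehat{e_\lambda}$ with compact support meets only finitely many of the relatively compact blocks defining $\|\cdot\|_{qr,\vartheta_2}$, and on each of them $\widehat{e_\lambda}\vartheta_2$ is bounded because $\vartheta_2$ is locally bounded, so $\widehat{e_\lambda}\in\left(L_{\vartheta_2}^q,\ell^r\right)$. Hence for any $f\in L_{\vartheta_1}^1(G)$ the ideal property yields $f\ast e_\lambda\in S_{\vartheta_1}$, while $\|f\ast e_\lambda-f\|_{1,\vartheta_1}\to0$, so $S_{\vartheta_1}$ is dense in $L_{\vartheta_1}^1(G)$, which is (S1).

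I expect (S1) to be the only genuine difficulty, and the delicate point is precisely that the approximating net must be chosen inside $S_{\vartheta_1}$. An arbitrary approximate identity of $L_{\vartheta_1}^1(G)$, such as normalized indicators of shrinking neighbourhoods, has a too slowly decaying Fourier transform that need not lie in the weighted amalgam $\left(L_{\vartheta_2}^q,\ell^r\right)$; it is exactly the (BD) condition that furnishes band-limited approximants. Equivalently, one may invoke the standard consequence of (BD) that the functions in $L_{\vartheta_1}^1(G)$ with compactly supported Fourier transform are dense, and note that this subspace sits inside $S_{\vartheta_1}$. Either route reduces (S1) to the regularity of the Beurling algebra guaranteed by the Beurling--Domar condition.
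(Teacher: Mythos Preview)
Your proposal is correct and follows essentially the same approach as the paper. The paper handles (S2)--(S5) by the same backward references to Theorems \ref{Banach algebra} and \ref{statement}, and for (S1) it takes exactly your second route: it defines $F_{0,\vartheta_1}=\{f\in L_{\vartheta_1}^1(G):\widehat f\text{ has compact support}\}$, uses the (BD) condition to assert its density in $L_{\vartheta_1}^1(G)$, and then concludes from the chain $F_{0,\vartheta_1}\subset A_{\vartheta_1,\vartheta_2}^{1,1,q,r}(G)\subset L_{\vartheta_1}^1(G)$, without passing through the approximate-identity/ideal argument you gave first.
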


\begin{proof}
We have already proved the several conditions for $S_{\vartheta _{1}}$
algebra\textbf{\ }in Theorem \ref{Banach algebra} and Theorem \ref{statement}%
. Now, let us denote%
\begin{equation*}
F_{0,\vartheta _{1}}=\left\{ f\in L_{\vartheta _{1}}^{1}\left( G\right) :%
\widehat{f}\text{ has a compact support}\right\} .
\end{equation*}%
Since $\vartheta _{1}$ satisfies (BD) condition, the set $F_{0,\vartheta
_{1}}$ is dense in $L_{\vartheta _{1}}^{1}\left( G\right) .$ It is clear
that $C_{c}\left( G\right) \subset \left( L_{\vartheta _{2}}^{q},\ell
^{r}\right) .$ Because of the fact that the inclusions $F_{0,\vartheta
_{1}}\subset A_{\vartheta _{1},\vartheta _{2}}^{1,1,q,r}\left( G\right)
\subset L_{\vartheta _{1}}^{1}\left( G\right) $ hold and $F_{0,\vartheta
_{1}}$ is dense in $L_{\vartheta _{1}}^{1}\left( G\right) $, then $%
A_{\vartheta _{1},\vartheta _{2}}^{1,1,q,r}\left( G\right) $ is dense in $%
L_{\vartheta _{1}}^{1}\left( G\right) $. That is the desired result.
\end{proof}

\begin{theorem}
\label{module}Let $1<p,q,r<\infty .$ If $\vartheta _{1}\prec \vartheta _{0}$%
, then $A_{\vartheta _{1},\vartheta _{2}}^{p,1,q,r}\left( G\right) $ is a
Banach $L_{\vartheta _{0}}^{1}\left( G\right) $-module with respect to $%
\left\Vert .\right\Vert _{\vartheta _{1},\vartheta _{2}}^{p,1,q,r}$.
\end{theorem}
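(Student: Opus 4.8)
The plan is to verify the two defining properties of a Banach $A$-module with $A=L_{\vartheta _{0}}^{1}\left( G\right) $, $B=A_{\vartheta _{1},\vartheta _{2}}^{p,1,q,r}\left( G\right) $, and module action given by convolution. Property \textit{(i)}, the associativity $\left( f\ast g\right) \ast h=f\ast \left( g\ast h\right) $ for $f,g\in L_{\vartheta _{0}}^{1}\left( G\right) $ and $h\in A_{\vartheta _{1},\vartheta _{2}}^{p,1,q,r}\left( G\right) $, is immediate: since $\vartheta _{0}\geq 1$ we have $L_{\vartheta _{0}}^{1}\left( G\right) \subset L^{1}\left( G\right) $, and $A_{\vartheta _{1},\vartheta _{2}}^{p,1,q,r}\left( G\right) \subset \left( L_{\vartheta _{1}}^{p},\ell ^{1}\right) \subset L^{1}\left( G\right) $, so all three functions are integrable and the associativity of convolution on $L^{1}\left( G\right) $ applies. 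The substance of the proof is property \textit{(ii)}: for $f\in L_{\vartheta _{0}}^{1}\left( G\right) $ and $h\in A_{\vartheta _{1},\vartheta _{2}}^{p,1,q,r}\left( G\right) $ I must show that $f\ast h\in A_{\vartheta _{1},\vartheta _{2}}^{p,1,q,r}\left( G\right) $ together with a bound $\left\Vert f\ast h\right\Vert _{\vartheta _{1},\vartheta _{2}}^{p,1,q,r}\leq C\left\Vert f\right\Vert _{1,\vartheta _{0}}\left\Vert h\right\Vert _{\vartheta _{1},\vartheta _{2}}^{p,1,q,r}$. Since the norm splits as $\left\Vert f\ast h\right\Vert _{p1,\vartheta _{1}}+\left\Vert \widehat{f\ast h}\right\Vert _{qr,\vartheta _{2}}$, I would estimate the two pieces separately.

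For the first piece I would exploit that $\vartheta _{1}$ is submultiplicative. From $\vartheta _{1}\left( x\right) \leq \vartheta _{1}\left( x-y\right) \vartheta _{1}\left( y\right) $ one obtains the pointwise domination $\left\vert \left( f\ast h\right) \left( x\right) \right\vert \vartheta _{1}\left( x\right) \leq \left( \left( \left\vert f\right\vert \vartheta _{1}\right) \ast \left( \left\vert h\right\vert \vartheta _{1}\right) \right) \left( x\right) $. Using the solidity of the unweighted amalgam $\left( L^{p},\ell ^{1}\right) $ and the standard convolution relation $L^{1}\ast \left( L^{p},\ell ^{1}\right) \subseteq \left( L^{p},\ell ^{1}\right) $ (see \cite{Squ}), this yields
\[
\left\Vert f\ast h\right\Vert _{p1,\vartheta _{1}}\leq \left\Vert \left( \left\vert f\right\vert \vartheta _{1}\right) \ast \left( \left\vert h\right\vert \vartheta _{1}\right) \right\Vert _{p1}\leq C\left\Vert f\right\Vert _{1,\vartheta _{1}}\left\Vert h\right\Vert _{p1,\vartheta _{1}}.
\]
Finally, because $\vartheta _{1}\prec \vartheta _{0}$ there is a constant with $\left\Vert f\right\Vert _{1,\vartheta _{1}}\leq C^{\prime }\left\Vert f\right\Vert _{1,\vartheta _{0}}$, so that $\left\Vert f\ast h\right\Vert _{p1,\vartheta _{1}}\leq C^{\prime \prime }\left\Vert f\right\Vert _{1,\vartheta _{0}}\left\Vert h\right\Vert _{p1,\vartheta _{1}}$ and, in particular, $f\ast h\in \left( L_{\vartheta _{1}}^{p},\ell ^{1}\right) $. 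Alternatively, this step can be quoted directly from the weighted module theory of amalgams in \cite{Feich3}.

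For the Fourier piece I would argue exactly as in Theorem \ref{Banach algebra}. Using $\widehat{f\ast h}=\widehat{f}\,\widehat{h}$ together with $\left\Vert \widehat{f}\right\Vert _{\infty }\leq \left\Vert f\right\Vert _{1}\leq \left\Vert f\right\Vert _{1,\vartheta _{0}}$ (the last step because $\vartheta _{0}\geq 1$), the pointwise bound $\left\vert \widehat{f\ast h}\left( x\right) \right\vert \leq \left\Vert \widehat{f}\right\Vert _{\infty }\left\vert \widehat{h}\left( x\right) \right\vert $ and the solidity of $\left( L_{\vartheta _{2}}^{q},\ell ^{r}\right) $ give
\[
\left\Vert \widehat{f\ast h}\right\Vert _{qr,\vartheta _{2}}\leq \left\Vert \widehat{f}\right\Vert _{\infty }\left\Vert \widehat{h}\right\Vert _{qr,\vartheta _{2}}\leq \left\Vert f\right\Vert _{1,\vartheta _{0}}\left\Vert \widehat{h}\right\Vert _{qr,\vartheta _{2}},
\]
so $\widehat{f\ast h}\in \left( L_{\vartheta _{2}}^{q},\ell ^{r}\right) $. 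Adding the two estimates and absorbing the constants (taking $C\geq 1$) produces $\left\Vert f\ast h\right\Vert _{\vartheta _{1},\vartheta _{2}}^{p,1,q,r}\leq C\left\Vert f\right\Vert _{1,\vartheta _{0}}\left\Vert h\right\Vert _{\vartheta _{1},\vartheta _{2}}^{p,1,q,r}$, which is property \textit{(ii)} and simultaneously confirms $f\ast h\in A_{\vartheta _{1},\vartheta _{2}}^{p,1,q,r}\left( G\right) $.

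I expect the only delicate point to be the first estimate, i.e. the weighted convolution relation $L_{\vartheta _{0}}^{1}\ast \left( L_{\vartheta _{1}}^{p},\ell ^{1}\right) \subseteq \left( L_{\vartheta _{1}}^{p},\ell ^{1}\right) $. It is worth noting that this does \emph{not} follow from the Banach-algebra property of $\left( L_{\vartheta _{1}}^{p},\ell ^{1}\right) $ alone, since for $p>1$ one does not have $L^{1}\subseteq \left( L^{p},\ell ^{1}\right) $; the submultiplicativity of $\vartheta _{1}$ and the hypothesis $\vartheta _{1}\prec \vartheta _{0}$ are precisely what make the weight bookkeeping close. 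Once this is in hand, the Fourier piece is a routine repetition of the computation behind (\ref{balgebra3}) in Theorem \ref{Banach algebra}.
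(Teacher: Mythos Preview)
Your proof is correct and follows essentially the same route as the paper: split the norm into its amalgam and Fourier pieces, bound the first via the Banach $L_{\vartheta _{1}}^{1}$-module property of $\left( L_{\vartheta _{1}}^{p},\ell ^{1}\right) $ together with $\vartheta _{1}\prec \vartheta _{0}$, and bound the second via $\left\Vert \widehat{f}\right\Vert _{\infty }\leq \left\Vert f\right\Vert _{1}\leq \left\Vert f\right\Vert _{1,\vartheta _{0}}$. The only difference is cosmetic: the paper simply quotes the module inequality $\left\Vert f\ast h\right\Vert _{p1,\vartheta _{1}}\leq C\left\Vert h\right\Vert _{1,\vartheta _{1}}\left\Vert f\right\Vert _{p1,\vartheta _{1}}$ as known, whereas you derive it from submultiplicativity of $\vartheta _{1}$, solidity, and the unweighted inclusion $L^{1}\ast \left( L^{p},\ell ^{1}\right) \subseteq \left( L^{p},\ell ^{1}\right) $; and you address associativity explicitly while the paper leaves it implicit.
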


\begin{proof}
Since $\vartheta _{1}\prec \vartheta _{0}$, we have $L_{\vartheta
_{0}}^{1}\left( G\right) \hookrightarrow L_{\vartheta _{1}}^{1}\left(
G\right) .$ Moreover, if we consider the fact that $\left( L_{\vartheta
_{1}}^{p},\ell ^{1}\right) $ is a Banach $L_{\vartheta _{1}}^{1}\left(
G\right) $-module for $1<p<\infty $, then there exists $C>0$ such that%
\begin{eqnarray*}
\left\Vert f\ast g\right\Vert _{\vartheta _{1},\vartheta _{2}}^{p,1,q,r}
&\leq &C\left\Vert f\right\Vert _{p1,\vartheta _{1}}\left\Vert g\right\Vert
_{1,\vartheta _{1}}+\left\Vert \widehat{f}\right\Vert _{qr,\vartheta
_{2}}\left\Vert \widehat{g}\right\Vert _{\infty } \\
&\leq &C\left\Vert f\right\Vert _{p1,\vartheta _{1}}\left\Vert g\right\Vert
_{1,\vartheta _{0}}+\left\Vert \widehat{f}\right\Vert _{qr,\vartheta
_{2}}\left\Vert \widehat{g}\right\Vert _{\infty } \\
&\leq &C\left\Vert f\right\Vert _{p1,\vartheta _{1}}\left\Vert g\right\Vert
_{1,\vartheta _{0}}+\left\Vert \widehat{f}\right\Vert _{qr,\vartheta
_{2}}\left\Vert g\right\Vert _{1,\vartheta _{0}} \\
&\leq &\max \left\{ 1,C\right\} \left\Vert f\right\Vert _{\vartheta
_{1},\vartheta _{2}}^{p,1,q,r}\left\Vert g\right\Vert _{1,\vartheta _{0}}
\end{eqnarray*}%
for any $f\in A_{\vartheta _{1},\vartheta _{2}}^{p,1,q,r}\left( G\right) $
and $g\in L_{\vartheta _{0}}^{1}\left( G\right) .$ If we define a new norm $%
\left\Vert \left\vert .\right\vert \right\Vert $ on $L_{\vartheta
_{0}}^{1}\left( G\right) $ such that $\left\Vert \left\vert .\right\vert
\right\Vert =\max \left\{ c_{1},c_{2}\right\} \left\Vert .\right\Vert
_{1,\vartheta _{0}},$ then this norm is equivalent to the norm $\left\Vert
.\right\Vert _{1,\vartheta _{0}}$ on $L_{\vartheta _{0}}^{1}\left( G\right)
. $ This completes the proof.
\end{proof}

\begin{theorem}
Suppose that $\vartheta _{1}$ satisfies (BD) condition. Then $A_{\vartheta
_{1},\vartheta _{2}}^{p,1,q,r}\left( G\right) $ has an approximate unit with
compactly supported Fourier transforms.
\end{theorem}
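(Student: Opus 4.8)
The plan is to produce the approximate unit on the Fourier transform side and then transport it back. The key external input is that, because $\vartheta _{1}$ satisfies the (BD) condition, the Beurling algebra $L_{\vartheta _{1}}^{1}\left( G\right) $ admits a bounded approximate identity $\left( e_{\alpha }\right) _{\alpha }$ whose Fourier transforms $\widehat{e_{\alpha }}$ lie in $C_{c}(\widehat{G})$; one may normalise so that $0\leq \widehat{e_{\alpha }}\leq 1$, $\widehat{e_{\alpha }}\equiv 1$ on a net of compacta exhausting $\widehat{G}$, $\int_{G}e_{\alpha }\,dx=\widehat{e_{\alpha }}(0)=1$, and $\sup_{\alpha }\left\Vert e_{\alpha }\right\Vert _{1,\vartheta _{1}}=:M<\infty $. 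This is precisely the situation covered by the Ingham--Domar construction: the series condition defining (BD) is exactly what permits nonzero functions with compactly supported Fourier transform to decay faster than $\vartheta _{1}^{-1}$, which is why the hypothesis on $\vartheta _{1}$ is the right one. I would cite this existence result and regard establishing it as the crux; everything else is verification.

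First I would check that each $e_{\alpha }$ actually belongs to $A_{\vartheta _{1},\vartheta _{2}}^{p,1,q,r}\left( G\right) $. The transform side is immediate: $\widehat{e_{\alpha }}\in C_{c}(\widehat{G})$ and $C_{c}$ is contained in every (weighted) amalgam, so $\widehat{e_{\alpha }}\in \left( L_{\vartheta _{2}}^{q},\ell ^{r}\right) $. For the physical side, since $\widehat{e_{\alpha }}$ has compact support, $e_{\alpha }$ is band-limited and hence bounded; together with the weighted decay furnished by the construction this places $e_{\alpha }$ in $\left( L_{\vartheta _{1}}^{\infty },\ell ^{1}\right) $. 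As $\mu \left( I_{\alpha }\right) =1$ one has $\left\Vert \cdot \right\Vert _{L^{p}(I_{\alpha })}\leq \left\Vert \cdot \right\Vert _{L^{\infty }(I_{\alpha })}$ on each cell, whence $\left( L_{\vartheta _{1}}^{\infty },\ell ^{1}\right) \hookrightarrow \left( L_{\vartheta _{1}}^{p},\ell ^{1}\right) $ and $e_{\alpha }\in \left( L_{\vartheta _{1}}^{p},\ell ^{1}\right) $. Thus $e_{\alpha }\in A_{\vartheta _{1},\vartheta _{2}}^{p,1,q,r}\left( G\right) $ with compactly supported Fourier transform.

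It then remains to show $\left\Vert e_{\alpha }\ast f-f\right\Vert _{\vartheta _{1},\vartheta _{2}}^{p,1,q,r}\longrightarrow 0$ for every $f\in A_{\vartheta _{1},\vartheta _{2}}^{p,1,q,r}\left( G\right) $, and the norm splits into its two summands. For the $\left\Vert \cdot \right\Vert _{p1,\vartheta _{1}}$ part I would use the averaging identity $e_{\alpha }\ast f-f=\int_{G}e_{\alpha }(y)\left( T_{y}f-f\right) \,dy$ (valid since $\int e_{\alpha }=1$), so that $\left\Vert e_{\alpha }\ast f-f\right\Vert _{p1,\vartheta _{1}}\leq \int_{G}\left\vert e_{\alpha }(y)\right\vert \left\Vert T_{y}f-f\right\Vert _{p1,\vartheta _{1}}\,dy$; splitting the integral over a small neighbourhood $U$ of the identity and its complement, the first piece is small by the continuity of $y\longmapsto T_{y}f$ (Theorem \ref{statement}), while the second is controlled by $2\left\Vert f\right\Vert _{p1,\vartheta _{1}}\int_{G\setminus U}\left\vert e_{\alpha }\right\vert \vartheta _{1}\,dy$, which tends to $0$ because $\left( e_{\alpha }\right) $ is an approximate identity of the Beurling algebra. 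Equivalently one may invoke that $\left( L_{\vartheta _{1}}^{p},\ell ^{1}\right) $ is an essential $L_{\vartheta _{1}}^{1}$-module (cf. Theorem \ref{module}), on which a bounded approximate identity necessarily acts as an approximate unit.

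For the transform part, $\widehat{e_{\alpha }\ast f}-\widehat{f}=\left( \widehat{e_{\alpha }}-1\right) \widehat{f}$. Given $\varepsilon >0$, since $q,r<\infty $ the tail of the convergent defining series of $\left\Vert \widehat{f}\right\Vert _{qr,\vartheta _{2}}$ is small, so there is a compact $K\subset \widehat{G}$ with $\left\Vert \left( 1-\chi _{K}\right) \widehat{f}\right\Vert _{qr,\vartheta _{2}}<\varepsilon $; using $0\leq \widehat{e_{\alpha }}\leq 1$ and solidity of the amalgam, the off-$K$ contribution is at most $2\varepsilon $, while for $\alpha $ large enough that $\widehat{e_{\alpha }}\equiv 1$ on $K$ the on-$K$ contribution vanishes. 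Hence $\left\Vert \left( \widehat{e_{\alpha }}-1\right) \widehat{f}\right\Vert _{qr,\vartheta _{2}}\longrightarrow 0$. Combining the two parts yields $\left\Vert e_{\alpha }\ast f-f\right\Vert _{\vartheta _{1},\vartheta _{2}}^{p,1,q,r}\longrightarrow 0$, so $\left( e_{\alpha }\right) $ is the desired approximate unit. The main obstacle is the first paragraph: securing a bounded, weight-summable, compactly-Fourier-supported approximate identity for $L_{\vartheta _{1}}^{1}\left( G\right) $, for which the (BD) hypothesis is indispensable.
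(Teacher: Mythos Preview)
Your argument works, but the paper takes a different and somewhat lighter route. Rather than importing a ready-made bounded approximate identity for $L_{\vartheta_1}^1(G)$ and then treating the two summands of the norm separately, the paper stays in the full norm $\|\cdot\|_{\vartheta_1,\vartheta_2}^{p,1,q,r}$ throughout and proceeds in two steps. First it picks a nonnegative $g\in C_c(G)$ supported in a small neighbourhood of $0$ with $\int g=1$ and uses the continuity of $y\mapsto T_yf$ (Theorem~\ref{statement}) to obtain $\|g\ast f-f\|_{\vartheta_1,\vartheta_2}^{p,1,q,r}<\varepsilon/2$ directly, handling both components at once. Then it uses (BD) only through the density of $F_{0,\vartheta_1}=\{h\in L_{\vartheta_1}^1:\widehat{h}\in C_c(\widehat{G})\}$ in $L_{\vartheta_1}^1(G)$ (cf.\ Theorem~\ref{segal}) to replace $g$ by some $h$ with compactly supported Fourier transform and $\|g-h\|_{1,\vartheta_1}$ small, transferring the error via the module estimate $\|(h-g)\ast f\|_{\vartheta_1,\vartheta_2}^{p,1,q,r}\le\|h-g\|_{1,\vartheta_1}\|f\|_{\vartheta_1,\vartheta_2}^{p,1,q,r}$. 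So the paper needs only density, not a bounded approximate identity, and never splits the norm; your version, on the other hand, produces a single net serving all $f$ simultaneously, which is a stronger conclusion than the paper's $f$-dependent approximate units.

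One place to tighten: in your estimate for $\|e_\alpha\ast f-f\|_{p1,\vartheta_1}$ you assert that $\int_{G\setminus U}|e_\alpha|\vartheta_1\,dy\to 0$ ``because $(e_\alpha)$ is an approximate identity of the Beurling algebra.'' Mass concentration near the identity is not a formal consequence of being a bounded approximate identity in the Banach-algebra sense, and on a general $G$ it is not forced by the properties you listed for your black box. Your alternative via the essential $L_{\vartheta_1}^1$-module structure is the robust argument and should be the primary one. Likewise, $e_\alpha\in(L_{\vartheta_1}^\infty,\ell^1)$ is not automatic from $e_\alpha\in L_{\vartheta_1}^1$ plus band-limitedness; it should be recorded explicitly as an output of the Ingham--Domar construction you are citing.
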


\begin{proof}
Let $f\in A_{\vartheta _{1},\vartheta _{2}}^{p,1,q,r}\left( G\right) $\ and $%
\varepsilon >0$\ be given. Then by Theorem \ref{statement}, there exists a
neighbourhood $U$\ of the unit element of $G$\ such that 
\begin{equation}
\left\Vert T_{y}f-f\right\Vert _{\vartheta _{1},\vartheta _{2}}^{p,1,q,r}<%
\frac{\varepsilon }{2}  \label{5}
\end{equation}%
for all $y\in U$. Let be taken a\ non-negative function $g\in C_{c}(G)$\
such that supp$g\subset U$\ and $\tint\limits_{G}g(y)dy=1$. Since%
\begin{equation*}
g\ast f-f=\tint\limits_{G}g(y)\left( T_{y}f-f\right) dy
\end{equation*}%
by using the inequality (\ref{5}), we get%
\begin{eqnarray}
\left\Vert g\ast f-f\right\Vert _{\vartheta _{1},\vartheta _{2}}^{p,1,q,r}
&=&\left\Vert \tint\limits_{G}g(y)\left( T_{y}f-f\right) dy\right\Vert
_{\vartheta _{1},\vartheta _{2}}^{p,1,q,r}  \notag \\
&\leq &\tint\limits_{G}g(y)\left\Vert T_{y}f-f\right\Vert _{\vartheta
_{1},\vartheta _{2}}^{p,1,q,r}dy  \notag \\
&=&\tint\limits_{U}g(y)\left\Vert T_{y}f-f\right\Vert _{\vartheta
_{1},\vartheta _{2}}^{p,1,q,r}dy<\frac{\varepsilon }{2}\tint%
\limits_{G}g(y)dy=\frac{\varepsilon }{2}.  \label{6}
\end{eqnarray}%
Moreover, since the set $F_{0,\vartheta _{1}}$\ is dense in $L_{\vartheta
_{1}}^{1}\left( G\right) $\ by Theorem \ref{segal}, there exists $h\in
F_{0,\vartheta _{1}}$\ such that 
\begin{equation}
\left\Vert g-h\right\Vert _{1,\vartheta _{1}}<\frac{\varepsilon }{%
2\left\Vert f\right\Vert _{\vartheta _{1},\vartheta _{2}}^{p,1,q,r}}.
\label{9}
\end{equation}%
If we consider that the space $C_{c}(G)$ is included in all amalgam spaces,
then it we have $h\in A_{\vartheta _{1},\vartheta _{2}}^{p,1,q,r}(G)$. Also,
it is clear that%
\begin{equation}
\left\Vert \left( h-g\right) \ast f\right\Vert _{\vartheta _{1},\vartheta
_{2}}^{p,1,q,r}\leq \left\Vert \left( h-g\right) \right\Vert _{1,\vartheta
_{1}}\left\Vert f\right\Vert _{\vartheta _{1},\vartheta _{2}}^{p,1,q,r}.
\label{8}
\end{equation}%
By (\ref{6}), (\ref{9}) and (\ref{8}), we obtain%
\begin{eqnarray*}
\left\Vert h\ast f-f\right\Vert _{\vartheta _{1},\vartheta _{2}}^{p,1,q,r}
&\leq &\left\Vert h\ast f-g\ast f\right\Vert _{\vartheta _{1},\vartheta
_{2}}^{p,1,q,r}+\left\Vert g\ast f-f\right\Vert _{\vartheta _{1},\vartheta
_{2}}^{p,1,q,r} \\
&<&\frac{\varepsilon }{2\left\Vert f\right\Vert _{\vartheta _{1},\vartheta
_{2}}^{p,1,q,r}}\left\Vert f\right\Vert _{\vartheta _{1},\vartheta
_{2}}^{p,1,q,r}+\frac{\varepsilon }{2}=\varepsilon \text{.}
\end{eqnarray*}
\end{proof}

Consider the mapping $\Phi $ from $A_{\vartheta _{1},\vartheta
_{2}}^{p,1,q,r}\left( G\right) $ into $\left( L_{\vartheta _{1}}^{p},\ell
^{1}\right) \times \left( L_{\vartheta _{2}}^{q},\ell ^{r}\right) $ defined
by $\Phi (f)=\left( f,\widehat{f}\right) $. This is a linear isometry from $%
A_{\vartheta _{1},\vartheta _{2}}^{p,1,q,r}\left( G\right) $ into $\left(
L_{\vartheta _{1}}^{p},\ell ^{1}\right) \times \left( L_{\vartheta
_{2}}^{q},\ell ^{r}\right) $ in sense to the norm%
\begin{equation*}
\left\Vert \left\vert \left( f,\widehat{f}\right) \right\vert \right\Vert
=\left\Vert f\right\Vert _{p1,\vartheta _{1}}+\left\Vert \widehat{f}%
\right\Vert _{qr,\vartheta _{2}}
\end{equation*}%
for $f\in A_{\vartheta _{1},\vartheta _{2}}^{p,1,q,r}\left( G\right) .$
Hence it is easy to see that $A_{\vartheta _{1},\vartheta
_{2}}^{p,1,q,r}\left( G\right) $ is a closed subspace of the Banach space $%
\left( L_{\vartheta _{1}}^{p},\ell ^{1}\right) \times \left( L_{\vartheta
_{2}}^{q},\ell ^{r}\right) $. Let%
\begin{equation*}
H=\left\{ \left( f,\widehat{f}\right) :f\in A_{\vartheta _{1},\vartheta
_{2}}^{p,1,q,r}\left( G\right) \right\}
\end{equation*}%
and%
\begin{equation*}
K=\left\{ 
\begin{array}{c}
\left( \varphi ,\psi \right) :\left( \varphi ,\psi \right) \in \left(
L_{\vartheta _{1}^{-1}}^{p^{\prime }},\ell ^{\infty }\right) \times \left(
L_{\vartheta _{2}^{-1}}^{q^{\prime }},\ell ^{r^{\prime }}\right) , \\ 
\int\limits_{G}f(x)\varphi (x)dx+\int\limits_{G}\widehat{f}(y)\psi (y)dy=0%
\text{, for all }\left( f,\widehat{f}\right) \in H%
\end{array}%
\right\} \text{,}
\end{equation*}%
where $\frac{1}{r}+\frac{1}{r^{\prime }}=1,\frac{1}{p}+\frac{1}{p^{\prime }}%
=1$ and $\frac{1}{q}+\frac{1}{q^{\prime }}=1$.

The following theorem is easily proved by Duality Theorem 1.7 in \cite{Liu1}.

\begin{theorem}
\label{isomorphic}The dual space $\left( A_{\vartheta _{1},\vartheta
_{2}}^{p,1,q,r}\left( G\right) \right) ^{\ast }$ of $A_{\vartheta
_{1},\vartheta _{2}}^{p,1,q,r}\left( G\right) $ is isomorphic to $\left(
L_{\vartheta _{1}^{-1}}^{p^{\prime }},\ell ^{\infty }\right) \times \left(
L_{\vartheta _{2}^{-1}}^{q^{\prime }},\ell ^{r^{\prime }}\right) /K$.
\end{theorem}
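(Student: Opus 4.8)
The plan is to reduce the statement to the standard Banach-space duality principle for closed subspaces: if $M$ is a closed subspace of a Banach space $X$, then $M^{\ast }$ is (isometrically) isomorphic to $X^{\ast }/M^{\perp }$, where $M^{\perp }$ denotes the annihilator of $M$ in $X^{\ast }$. This is precisely the content of the Duality Theorem~1.7 in \cite{Liu1} that the statement invokes, so the whole argument is an identification of the four objects $M$, $X$, $X^{\ast }$, and $M^{\perp }$ in that abstract theorem with the concrete spaces appearing here.

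First I would record what the paragraph preceding the theorem already supplies: the map $\Phi (f)=\left( f,\widehat{f}\right) $ is a linear isometry of $A_{\vartheta _{1},\vartheta _{2}}^{p,1,q,r}\left( G\right) $ onto the closed subspace $H$ of the ambient Banach space $X:=\left( L_{\vartheta _{1}}^{p},\ell ^{1}\right) \times \left( L_{\vartheta _{2}}^{q},\ell ^{r}\right) $, taken with the norm $\left\Vert \left\vert \left( f,\widehat{f}\right) \right\vert \right\Vert =\left\Vert f\right\Vert _{p1,\vartheta _{1}}+\left\Vert \widehat{f}\right\Vert _{qr,\vartheta _{2}}$. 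Consequently the adjoint of $\Phi $ yields an isometric isomorphism $\left( A_{\vartheta _{1},\vartheta _{2}}^{p,1,q,r}\left( G\right) \right) ^{\ast }\cong H^{\ast }$, and it therefore suffices to describe $H^{\ast }$ via the duality theorem applied with $M=H$.

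Next I would identify the dual of the ambient space $X$. Since the dual of a finite product of Banach spaces is the product of the duals, and since by the duality of (weighted) amalgam spaces recalled in the preliminaries the dual of $\left( L_{\vartheta _{1}}^{p},\ell ^{1}\right) $ is $\left( L_{\vartheta _{1}^{-1}}^{p^{\prime }},\ell ^{\infty }\right) $ and that of $\left( L_{\vartheta _{2}}^{q},\ell ^{r}\right) $ is $\left( L_{\vartheta _{2}^{-1}}^{q^{\prime }},\ell ^{r^{\prime }}\right) $ (the weight passing to $\vartheta ^{-1}$ as for $L_{\vartheta }^{p}$), one gets $X^{\ast }\cong \left( L_{\vartheta _{1}^{-1}}^{p^{\prime }},\ell ^{\infty }\right) \times \left( L_{\vartheta _{2}^{-1}}^{q^{\prime }},\ell ^{r^{\prime }}\right) $, the duality between $\left( f,\widehat{f}\right) \in X$ and $\left( \varphi ,\psi \right) \in X^{\ast }$ being $\int_{G}f\varphi \,dx+\int_{G}\widehat{f}\psi \,dy$. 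With this explicit pairing in hand, the annihilator $H^{\perp }=\{(\varphi ,\psi )\in X^{\ast }:\int_{G}f\varphi \,dx+\int_{G}\widehat{f}\psi \,dy=0\ \text{for all}\ (f,\widehat{f})\in H\}$ coincides, term by term, with the set $K$ as defined just above the statement. Feeding this into the duality theorem gives $H^{\ast }\cong X^{\ast }/K$, and composing with $\left( A_{\vartheta _{1},\vartheta _{2}}^{p,1,q,r}\left( G\right) \right) ^{\ast }\cong H^{\ast }$ produces the asserted isomorphism.

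The only genuine content, and hence the main obstacle, is the correct identification of $X^{\ast }$ with the stated product of dual weighted amalgam spaces; the matching of the abstract annihilator $H^{\perp }$ with $K$ is then immediate, since $K$ was defined precisely as that annihilator once the pairing is written out explicitly. Everything else is the formal machinery of the quotient-duality theorem together with the isometry $\Phi $, so I expect no serious technical difficulty beyond quoting the amalgam-space duality and the theorem from \cite{Liu1}.
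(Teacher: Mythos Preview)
Your proposal is correct and follows exactly the approach the paper intends: the paper merely says the theorem ``is easily proved by Duality Theorem~1.7 in \cite{Liu1}'' after setting up the isometry $\Phi$, the closed subspace $H$, and the annihilator $K$, and your write-up supplies precisely the identifications ($M=H$, $X=\left( L_{\vartheta _{1}}^{p},\ell ^{1}\right) \times \left( L_{\vartheta _{2}}^{q},\ell ^{r}\right)$, $X^{\ast}$ the product of the dual weighted amalgams, $M^{\perp}=K$) needed to invoke that theorem. There is nothing to add or correct.
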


\section{\textbf{Inclusions of the spaces }$A_{\protect\vartheta _{1},%
\protect\vartheta _{2}}^{p,1,q,r}\left( G\right) $}

\begin{theorem}
\label{closedgraph}The inclusion $A_{\vartheta _{1},\vartheta
_{2}}^{p,1,q,r}\left( G\right) \subset A_{\vartheta _{3},\vartheta
_{4}}^{p,1,q,r}\left( G\right) $ holds if and only if the space $%
A_{\vartheta _{1},\vartheta _{2}}^{p,1,q,r}\left( G\right) $ is continuously
embedded in $A_{\vartheta _{3},\vartheta _{4}}^{p,1,q,r}\left( G\right) $.
\end{theorem}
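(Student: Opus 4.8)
The plan is to recognize this as a direct application of the Closed Graph Theorem, consistent with the label of the statement. One implication is immediate: if $A_{\vartheta _{1},\vartheta _{2}}^{p,1,q,r}\left( G\right) $ is continuously embedded in $A_{\vartheta _{3},\vartheta _{4}}^{p,1,q,r}\left( G\right) $, then in particular every element of the first space already lies in the second, so the set-theoretic inclusion holds. Thus the whole content is in the converse direction, and that is where I would concentrate the effort.

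For the converse, I would assume the inclusion $A_{\vartheta _{1},\vartheta _{2}}^{p,1,q,r}\left( G\right) \subset A_{\vartheta _{3},\vartheta _{4}}^{p,1,q,r}\left( G\right) $ holds as sets and consider the identity map $\iota :A_{\vartheta _{1},\vartheta _{2}}^{p,1,q,r}\left( G\right) \longrightarrow A_{\vartheta _{3},\vartheta _{4}}^{p,1,q,r}\left( G\right) $ given by $\iota (f)=f$. This map is well defined by the inclusion hypothesis and is clearly linear. By Theorem \ref{banach} both the domain and the codomain are Banach spaces, so to conclude that $\iota $ is bounded, equivalently that there exists $C>0$ with $\left\Vert f\right\Vert _{\vartheta _{3},\vartheta _{4}}^{p,1,q,r}\leq C\left\Vert f\right\Vert _{\vartheta _{1},\vartheta _{2}}^{p,1,q,r}$ for all $f$, it suffices by the Closed Graph Theorem to verify that the graph of $\iota $ is closed.

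To check closedness of the graph, I would take a sequence with $f_{n}\longrightarrow f$ in $\left\Vert .\right\Vert _{\vartheta _{1},\vartheta _{2}}^{p,1,q,r}$ and $f_{n}\longrightarrow g$ in $\left\Vert .\right\Vert _{\vartheta _{3},\vartheta _{4}}^{p,1,q,r}$, and show $f=g$. From the first convergence one gets $\left\Vert f_{n}-f\right\Vert _{p1,\vartheta _{1}}\longrightarrow 0$, and from the second $\left\Vert f_{n}-g\right\Vert _{p1,\vartheta _{3}}\longrightarrow 0$. Applying Theorem \ref{obvious} (with the second index equal to $1$) to the first convergence, I extract a subsequence $\left( f_{n_{k}}\right) $ converging pointwise almost everywhere to $f$; applying it once more to this subsequence together with the second convergence, I extract a further subsequence converging pointwise almost everywhere to $g$. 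Since a sequence of functions cannot have two distinct almost-everywhere pointwise limits, $f=g$ almost everywhere, so $\left( f,g\right) $ lies on the graph and the graph is closed.

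I do not anticipate a genuine obstacle here: the only point requiring care is the iterated subsequence extraction that forces $f=g$, and this rests entirely on Theorem \ref{obvious}, which converts norm convergence in the amalgam norms into pointwise almost-everywhere convergence along a subsequence. Once the graph is shown to be closed, the Closed Graph Theorem yields the continuous embedding immediately, completing the nontrivial implication.
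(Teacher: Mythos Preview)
Your proof is correct, but it is organized differently from the paper's. The paper equips $A_{\vartheta _{1},\vartheta _{2}}^{p,1,q,r}(G)$ with the sum norm $\left\Vert \left\vert f\right\vert \right\Vert =\left\Vert f\right\Vert _{\vartheta _{1},\vartheta _{2}}^{p,1,q,r}+\left\Vert f\right\Vert _{\vartheta _{3},\vartheta _{4}}^{p,1,q,r}$, asserts that this norm is complete, and then applies the bounded inverse theorem (``Banach's theorem'') to the identity map $\left( A_{\vartheta _{1},\vartheta _{2}}^{p,1,q,r}(G),\left\Vert \left\vert \cdot \right\vert \right\Vert \right) \to \left( A_{\vartheta _{1},\vartheta _{2}}^{p,1,q,r}(G),\left\Vert \cdot \right\Vert _{\vartheta _{1},\vartheta _{2}}^{p,1,q,r}\right) $ to conclude that the two norms are equivalent, whence $\left\Vert f\right\Vert _{\vartheta _{3},\vartheta _{4}}^{p,1,q,r}\leq c\left\Vert f\right\Vert _{\vartheta _{1},\vartheta _{2}}^{p,1,q,r}$. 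Your route via the Closed Graph Theorem is the natural dual of this argument; in fact, the step the paper sweeps under ``it is easy to see that $\left( A_{\vartheta _{1},\vartheta _{2}}^{p,1,q,r}(G),\left\Vert \left\vert \cdot \right\vert \right\Vert \right) $ is a Banach space'' requires exactly the identification of limits that you carry out explicitly with Theorem~\ref{obvious}. What your approach buys is that this key point is made visible rather than hidden, and you avoid introducing an auxiliary norm; what the paper's approach buys is a slightly cleaner final inequality once completeness of the sum norm is granted.
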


\begin{proof}
The sufficient condition of the theorem is clear by definition of embedding.
Now, assume that $A_{\vartheta _{1},\vartheta _{2}}^{p,1,q,r}\left( G\right)
\subset A_{\vartheta _{3},\vartheta _{4}}^{p,1,q,r}\left( G\right) $ holds.
Moreover, we define the sum norm $\left\Vert \left\vert .\right\vert
\right\Vert =\left\Vert .\right\Vert _{\vartheta _{1},\vartheta
_{2}}^{p,1,q,r}+\left\Vert .\right\Vert _{\vartheta _{3},\vartheta
_{4}}^{p,1,q,r}.$ It is easy to see that $\left( A_{\vartheta _{1},\vartheta
_{2}}^{p,1,q,r}\left( G\right) ,\left\Vert \left\vert .\right\vert
\right\Vert \right) $ is a Banach space. Now, let us define the unit
function $I$\ from $\left( A_{\vartheta _{1},\vartheta _{2}}^{p,1,q,r}\left(
G\right) ,\left\Vert \left\vert .\right\vert \right\Vert \right) $\ into $%
\left( A_{\vartheta _{1},\vartheta _{2}}^{p,1,q,r}\left( G\right)
,\left\Vert .\right\Vert _{\vartheta _{1},\vartheta _{2}}^{p,1,q,r}\right) $%
. Since the inequality%
\begin{equation*}
\left\Vert I\left( f\right) \right\Vert _{\vartheta _{1},\vartheta
_{2}}^{p,1,q,r}=\left\Vert f\right\Vert _{\vartheta _{1},\vartheta
_{2}}^{p,1,q,r}\leq \left\Vert \left\vert f\right\vert \right\Vert
\end{equation*}%
is satisfied, $I$\ is continuous. If we consider the Banach's theorem, then $%
I$\ is a homeomorphism, see \cite{Cartan}. That means the norms $\left\Vert
\left\vert .\right\vert \right\Vert $\ and $\left\Vert .\right\Vert
_{\vartheta _{1},\vartheta _{2}}^{p,1,q,r}$\ are equivalent. Thus, for every 
$f\in A_{\vartheta _{1},\vartheta _{2}}^{p,1,q,r}\left( G\right) $\ there
exists $c>0$\ such that%
\begin{equation}
\left\Vert \left\vert f\right\vert \right\Vert \leq c\left\Vert f\right\Vert
_{\vartheta _{1},\vartheta _{2}}^{p,1,q,r}.  \label{embedding4}
\end{equation}%
Therefore, by using (\ref{embedding4}) and the definition of norm $%
\left\Vert \left\vert .\right\vert \right\Vert ,$\ we obtain 
\begin{equation*}
\left\Vert f\right\Vert _{\vartheta _{3},\vartheta _{4}}^{p,1,q,r}\leq
\left\Vert \left\vert f\right\vert \right\Vert \leq c\left\Vert f\right\Vert
_{\vartheta _{1},\vartheta _{2}}^{p,1,q,r}\text{.}
\end{equation*}%
That is the desired result.
\end{proof}

Now, we give some continuous embeddings of $A_{\vartheta _{1},\vartheta
_{2}}^{p,1,q,r}\left( G\right) $ under some conditions.

\begin{theorem}
\label{true}The following statements are true.

\begin{enumerate}
\item[\textit{(i)}] If $p_{2}\leq p_{1}$ and $r_{1}\leq r_{2}$, then we have 
$A_{\vartheta _{1},\vartheta _{2}}^{p_{1},1,q,r_{1}}\left( G\right)
\hookrightarrow A_{\vartheta _{1},\vartheta _{2}}^{p_{2},1,q,r_{2}}\left(
G\right) .$

\item[\textit{(ii)}] If $\vartheta _{3}\prec \vartheta _{1}$ and $\vartheta
_{4}\prec \vartheta _{2},$ then $A_{\vartheta _{1},\vartheta
_{2}}^{p,1,q,r}\left( G\right) $ is continuously embedded in $A_{\vartheta
_{3},\vartheta _{4}}^{p,1,q,r}\left( G\right) $.

\item[\textit{(iii)}] If $p_{2}\leq p_{1}$ and $q_{2}\leq q_{1}$, then we
get $A_{\vartheta _{1},\vartheta _{2}}^{p_{1},1,q_{1},r}\left( G\right)
\hookrightarrow A_{\vartheta _{1},\vartheta _{2}}^{p_{2},1,q_{2},r}\left(
G\right) .$

\item[\textit{(iv)}] If $\vartheta _{3}\prec \vartheta _{1}$, $q_{2}\leq
q_{1}$ and $r_{1}\leq r_{2},$ then the space $A_{\vartheta _{1},\vartheta
_{2}}^{p,1,q_{1},r_{1}}\left( G\right) $ is continuously embedded in $%
A_{\vartheta _{3},\vartheta _{2}}^{p,1,q_{2},r_{2}}\left( G\right) .$
\end{enumerate}
\end{theorem}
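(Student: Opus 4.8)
The plan is to observe that all four statements are embeddings of spaces whose norm is the sum of two (weighted) amalgam norms — one controlling $f$ via $\|f\|_{p1,\vartheta_1}$ and one controlling $\widehat{f}$ via $\|\widehat{f}\|_{qr,\vartheta_2}$. Hence each claim reduces to componentwise monotonicity of the weighted amalgam norm $\|\cdot\|_{pq,\vartheta}$ in the three ingredients that vary across (i)--(iv): the local exponent, the global exponent, and the weight. So first I would record three elementary monotonicity facts for $\|\cdot\|_{pq,\vartheta}$ and then dispatch (i)--(iv) by applying the right fact to each summand.

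The three facts I would establish are the following. (a) \emph{Local monotonicity:} if $p_2\le p_1$ then $\|f\|_{p_2q,\vartheta}\le\|f\|_{p_1q,\vartheta}$, because the normalization $\mu(I_\alpha)=1$ and Hölder's inequality on the finite-measure set $I_\alpha$ give $\|f\vartheta\|_{L^{p_2}(I_\alpha)}\le\mu(I_\alpha)^{1/p_2-1/p_1}\|f\vartheta\|_{L^{p_1}(I_\alpha)}=\|f\vartheta\|_{L^{p_1}(I_\alpha)}$, and taking the $\ell^q$-norm over $\alpha\in J$ preserves the inequality. (b) \emph{Global monotonicity:} if $r_1\le r_2$ then $\|g\|_{qr_2,\vartheta}\le\|g\|_{qr_1,\vartheta}$, since the nonnegative sequence $(\|g\vartheta\|_{L^q(I_\alpha)})_{\alpha\in J}$ satisfies $\ell^{r_1}\subseteq\ell^{r_2}$ with $\|\cdot\|_{\ell^{r_2}}\le\|\cdot\|_{\ell^{r_1}}$. (c) \emph{Weight monotonicity:} if $\vartheta'\prec\vartheta$, say $\vartheta'\le C\vartheta$, then $\|f\vartheta'\|_{L^p(I_\alpha)}\le C\|f\vartheta\|_{L^p(I_\alpha)}$ for each $\alpha$, whence $\|f\|_{pq,\vartheta'}\le C\|f\|_{pq,\vartheta}$. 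In each case the stated norm bound also turns the set inclusion into a continuous embedding.

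With (a)--(c) in hand the four parts follow by applying them to the two summands of $\|f\|_{\vartheta_1,\vartheta_2}^{p,1,q,r}=\|f\|_{p1,\vartheta_1}+\|\widehat{f}\|_{qr,\vartheta_2}$. For (i) I apply (a) to the $f$-summand (local exponent drops $p_1\to p_2$, global exponent fixed at $1$) and (b) to the $\widehat{f}$-summand (local exponent fixed at $q$, global exponent rises $r_1\to r_2$). For (ii) I apply (c) to both summands, using $\vartheta_3\prec\vartheta_1$ on the $f$-summand and $\vartheta_4\prec\vartheta_2$ on the $\widehat{f}$-summand. For (iii) I apply (a) to each summand (local exponents $p_1\to p_2$ and $q_1\to q_2$ both decrease, globals $1$ and $r$ unchanged). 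For (iv) I apply (c) to the $f$-summand ($\vartheta_3\prec\vartheta_1$) and the combination of (a) and (b) to the $\widehat{f}$-summand (local $q_1\to q_2$ and global $r_1\to r_2$). In every case each summand of the target norm is dominated by a constant times the corresponding summand of the source norm, so the identity map is a bounded, hence continuous, injection.

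The only real care needed — which I would flag as the main pitfall rather than a deep obstacle — is keeping the directions of all inequalities consistent: decreasing the local exponent and increasing the global exponent both \emph{enlarge} the amalgam space (because $\mu(I_\alpha)=1$ forces nesting of the local $L^p$-spaces, while counting measure nests the $\ell^r$ the opposite way), whereas a heavier weight \emph{shrinks} it. Once fact (a) is correctly pinned to the normalization $\mu(I_\alpha)=1$ and fact (b) to the nesting $\ell^{r_1}\subseteq\ell^{r_2}$, the bookkeeping in (i)--(iv) is immediate, and no further structure of $G$ or of the Fourier transform is required.
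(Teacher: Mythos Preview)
Your proposal is correct and follows essentially the same approach as the paper: both reduce the claimed embeddings to componentwise monotonicity of the two weighted amalgam norms in the sum $\|f\|_{p1,\vartheta_1}+\|\widehat f\|_{qr,\vartheta_2}$, treating (iii) and (iv) as immediate consequences of the ingredients used in (i) and (ii). The only difference is that the paper cites the amalgam embeddings $(L_{\vartheta_1}^{p_1},\ell^1)\hookrightarrow(L_{\vartheta_1}^{p_2},\ell^1)$ and $(L_{\vartheta_2}^{q},\ell^{r_1})\hookrightarrow(L_{\vartheta_2}^{q},\ell^{r_2})$ from the literature, whereas you prove your facts (a)--(c) directly from H\"older on the normalized cells $I_\alpha$ and the nesting of $\ell^r$-spaces; this makes your write-up a bit more self-contained but is not a genuinely different route.
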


\begin{proof}
Let $p_{2}\leq p_{1}$ and $r_{1}\leq r_{2}$. Moreover, since the embeddings $%
\left( L_{\vartheta _{1}}^{p_{1}},\ell ^{1}\right) \hookrightarrow \left(
L_{\vartheta _{1}}^{p_{2}},\ell ^{1}\right) $ and $\left( L_{\vartheta
_{2}}^{q},\ell ^{r_{1}}\right) \hookrightarrow \left( L_{\vartheta
_{2}}^{q},\ell ^{r_{2}}\right) $ hold under these hypotheses (see \cite%
{Heil1}, \cite{Squ}), there exist $c_{1},c_{2}>0$ such that%
\begin{eqnarray*}
\left\Vert f\right\Vert _{\vartheta _{1},\vartheta _{2}}^{p_{2},1,q,r_{2}}
&\leq &c_{1}\left\Vert f\right\Vert _{p_{1}1,\vartheta _{1}}+c_{2}\left\Vert 
\widehat{f}\right\Vert _{qr_{1},\vartheta _{2}} \\
&\leq &\max \left\{ c_{1},c_{2}\right\} \left\Vert f\right\Vert _{\vartheta
_{1},\vartheta _{2}}^{p_{1},1,q,r_{1}}
\end{eqnarray*}%
for all $f\in A_{\vartheta _{1},\vartheta _{2}}^{p_{1},1,q,r_{1}}\left(
G\right) $. This completes $\left( i\right) .$ Now, let $f\in A_{\vartheta
_{1},\vartheta _{2}}^{p,1,q,r}\left( G\right) $ be given. Hence, we get $%
f\in $ $\left( L_{\vartheta _{1}}^{p},\ell ^{1}\right) $ and $\widehat{f}\in
\left( L_{\vartheta _{2}}^{q},\ell ^{r}\right) $. Moreover, assume that $%
\vartheta _{3}\prec \vartheta _{1}$ and $\vartheta _{4}\prec \vartheta _{2}$
hold. This follows that there exist $c_{3},c_{4}>0$ such that $\vartheta
_{3}\left( x\right) \leq c_{3}\vartheta _{1}\left( x\right) $ and $\vartheta
_{4}\left( x\right) \leq c_{4}\vartheta _{2}\left( x\right) $ for all $x\in
G.$ Therefore, we have%
\begin{eqnarray*}
\left\Vert f\right\Vert _{\vartheta _{3},\vartheta _{4}}^{p,1,q,r}
&=&\dsum\limits_{n\in J}\left\Vert f\right\Vert _{L_{\vartheta
_{3}}^{p}\left( I_{n}\right) }+\left( \dsum\limits_{m\in J}\left\Vert 
\widehat{f}\right\Vert _{L_{\vartheta _{4}}^{q}\left( I_{m}\right)
}^{r}\right) ^{\frac{1}{r}} \\
&\leq &c_{3}\dsum\limits_{n\in J}\left\Vert f\right\Vert _{L_{\vartheta
_{1}}^{p}\left( I_{n}\right) }+c_{4}\left( \dsum\limits_{m\in J}\left\Vert 
\widehat{f}\right\Vert _{L_{\vartheta _{2}}^{q}\left( I_{m}\right)
}^{r}\right) ^{\frac{1}{r}} \\
&\leq &\max \left\{ c_{3},c_{4}\right\} \left\Vert f\right\Vert _{\vartheta
_{1},\vartheta _{2}}^{p,1,q,r}.
\end{eqnarray*}

This proves $(ii)$. If we consider $(i)$ and $(ii),$ then we obtain $(iii)$
and $(iv).$
\end{proof}

\begin{theorem}
\label{ifandonlyif}Let $\vartheta ,\vartheta _{1}$ and $\vartheta _{2}$ be
Beurling's weights. Then the space $A_{\vartheta _{1},\vartheta
}^{p,1,q,r}\left( G\right) $ is continuously embedded in $A_{\vartheta
_{2},\vartheta }^{p,1,q,r}\left( G\right) $ if and only if $\vartheta
_{2}\prec \vartheta _{1}.$
\end{theorem}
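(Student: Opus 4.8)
The plan is to prove the biconditional by handling each direction separately, using the weight‐comparison machinery already established. The key observation is that the second weight $\vartheta$ is held fixed on the Fourier‐transform side, so the entire content of the equivalence lives on the first component, namely the comparison of the $(L^{p}_{\vartheta_{1}},\ell^{1})$ and $(L^{p}_{\vartheta_{2}},\ell^{1})$ norms through $\vartheta_{1}$ and $\vartheta_{2}$.

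First I would prove the ``if'' direction. Assume $\vartheta_{2}\prec\vartheta_{1}$, so there is a constant $c>0$ with $\vartheta_{2}(x)\le c\,\vartheta_{1}(x)$ for all $x\in G$. Then for any $f\in A_{\vartheta_{1},\vartheta}^{p,1,q,r}(G)$ the estimate $\Vert f\Vert_{p1,\vartheta_{2}}\le c\,\Vert f\Vert_{p1,\vartheta_{1}}$ follows directly by the solidity of the local $L^{p}$ norms on each $I_{\alpha}$ (exactly as in the computation proving Theorem \ref{true}(ii)). Since the Fourier‐side weight $\vartheta$ is unchanged, the term $\Vert\widehat{f}\Vert_{qr,\vartheta}$ is identical on both sides, so adding the two contributions yields
\begin{equation*}
\Vert f\Vert_{\vartheta_{2},\vartheta}^{p,1,q,r}\le\max\{1,c\}\,\Vert f\Vert_{\vartheta_{1},\vartheta}^{p,1,q,r},
\end{equation*}
which is precisely the continuous embedding $A_{\vartheta_{1},\vartheta}^{p,1,q,r}(G)\hookrightarrow A_{\vartheta_{2},\vartheta}^{p,1,q,r}(G)$. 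This direction is essentially a special case of Theorem \ref{true}(ii) with $\vartheta_{4}=\vartheta_{2}$ fixed, so it requires no new idea.

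The harder direction is the converse, and this is where I expect the main obstacle. Assume the continuous embedding holds, so there exists $C>0$ with $\Vert f\Vert_{\vartheta_{2},\vartheta}^{p,1,q,r}\le C\,\Vert f\Vert_{\vartheta_{1},\vartheta}^{p,1,q,r}$ for all $f$; in particular $\Vert f\Vert_{p1,\vartheta_{2}}\le C\,\Vert f\Vert_{\vartheta_{1},\vartheta}^{p,1,q,r}$. The difficulty is that the full $A$‐norm on the right bundles in the Fourier term $\Vert\widehat{f}\Vert_{qr,\vartheta}$, so to extract a pointwise comparison $\vartheta_{2}\prec\vartheta_{1}$ I must test against functions whose full norm is controlled by $\Vert f\Vert_{p1,\vartheta_{1}}$ alone. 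The natural strategy is to fix a single ``bump'' $f_{0}\in C_{c}(G)\subset A_{\vartheta_{1},\vartheta}^{p,1,q,r}(G)$ supported near the identity with $f_{0}\ge 0$, and apply the embedding inequality to its translates $T_{y}f_{0}$. Using that each weight is a Beurling weight (submultiplicative, $\ge 1$) together with the standard two‐sided local estimate — analogous to the chain $C\vartheta(y)\le\Vert T_{y}f\Vert_{p,\vartheta}$ appearing in the proof of Theorem \ref{statement} — one gets that $\Vert T_{y}f_{0}\Vert_{p1,\vartheta_{i}}$ is comparable to $\vartheta_{i}(y)$ up to constants depending only on $f_{0}$. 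Crucially, the Fourier‐side contribution is $\Vert\widehat{T_{y}f_{0}}\Vert_{qr,\vartheta}=\Vert M_{-y}\widehat{f_{0}}\Vert_{qr,\vartheta}=\Vert\widehat{f_{0}}\Vert_{qr,\vartheta}$, a constant independent of $y$ by the strong character invariance used in Theorem \ref{statement}. Feeding these into the embedding inequality gives
\begin{equation*}
c_{1}\vartheta_{2}(y)\le\Vert T_{y}f_{0}\Vert_{p1,\vartheta_{2}}\le C\bigl(\Vert T_{y}f_{0}\Vert_{p1,\vartheta_{1}}+\Vert\widehat{f_{0}}\Vert_{qr,\vartheta}\bigr)\le C\bigl(c_{2}\vartheta_{1}(y)+c_{3}\bigr),
\end{equation*}
and since $\vartheta_{1}(y)\ge 1$ the constant term $c_{3}$ is absorbed into $c_{3}\vartheta_{1}(y)$, yielding $\vartheta_{2}(y)\le C'\vartheta_{1}(y)$ for all $y\in G$, i.e.\ $\vartheta_{2}\prec\vartheta_{1}$. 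The only real care needed is the lower bound $c_{1}\vartheta_{2}(y)\le\Vert T_{y}f_{0}\Vert_{p1,\vartheta_{2}}$, which I would obtain by the same local‐norm argument (Lemma 2.2 of \cite{Feich3}) that underlies Theorem \ref{statement}, exploiting that $f_{0}$ has fixed compact support so that only finitely many cells $I_{\alpha}$ meet the support of $T_{y}f_{0}$ and the weight is essentially constant (up to submultiplicative factors) across that translate.
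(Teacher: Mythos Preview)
Your approach is essentially the same as the paper's: both directions proceed exactly as you outline, with the converse established by applying the embedding inequality to translates $T_{y}f$ of a fixed nonzero element and invoking the two--sided estimate $C_{1}\vartheta_{i}(y)\le\Vert T_{y}f\Vert_{\vartheta_{i},\vartheta}^{p,1,q,r}\le C_{2}\vartheta_{i}(y)$ (the paper simply cites Theorem~\ref{statement} for this, whereas you unpack its proof inline by separating off the character--invariant Fourier term). One small slip: the inclusion $C_{c}(G)\subset A_{\vartheta_{1},\vartheta}^{p,1,q,r}(G)$ you assert is not true in general, since $\widehat{f_{0}}$ need not lie in $(L_{\vartheta}^{q},\ell^{r})$ for arbitrary Beurling weight $\vartheta$; just take $f_{0}$ to be any nonzero element of $A_{\vartheta_{1},\vartheta}^{p,1,q,r}(G)$ (as the paper does), and your argument goes through unchanged.
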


\begin{proof}
Assume that $A_{\vartheta _{1},\vartheta }^{p,1,q,r}\left( G\right)
\hookrightarrow A_{\vartheta _{2},\vartheta }^{p,1,q,r}\left( G\right) .$ By
the Theorem \ref{statement}, there are $C_{1},C_{2},C_{3},C_{4}>0$ such that%
\begin{equation}
C_{1}\vartheta _{1}\left( y\right) \leq \left\Vert T_{y}f\right\Vert
_{\vartheta _{1},\vartheta }^{p,1,q,r}\leq C_{2}\vartheta _{1}\left( y\right)
\label{embed1}
\end{equation}%
and%
\begin{equation}
C_{3}\vartheta _{2}\left( y\right) \leq \left\Vert T_{y}f\right\Vert
_{\vartheta _{2},\vartheta }^{p,1,q,r}\leq C_{4}\vartheta _{2}\left( y\right)
\label{embed2}
\end{equation}%
for $y\in G.$ Since $T_{y}f\in A_{\vartheta _{1},\vartheta }^{p,1,q,r}\left(
G\right) $ for every $f\in A_{\vartheta _{1},\vartheta }^{p,1,q,r}\left(
G\right) ,$ there is a $C>0$ such that%
\begin{equation}
\left\Vert T_{y}f\right\Vert _{\vartheta _{2},\vartheta }^{p,1,q,r}\leq
C\left\Vert T_{y}f\right\Vert _{\vartheta _{1},\vartheta }^{p,1,q,r}.
\label{embed3}
\end{equation}%
If we consider the (\ref{embed1}), (\ref{embed2}) and (\ref{embed3}), then
we have%
\begin{equation*}
C_{3}\vartheta _{2}\left( y\right) \leq \left\Vert T_{y}f\right\Vert
_{\vartheta _{2},\vartheta }^{p,1,q,r}\leq C\left\Vert T_{y}f\right\Vert
_{\vartheta _{1},\vartheta }^{p,1,q,r}\leq CC_{2}\vartheta _{1}\left(
y\right) .
\end{equation*}%
This follows that $\vartheta _{2}\prec \vartheta _{1}.$ The rest of the
proof can be proven by the similar in Theorem \ref{true}.
\end{proof}

The following corollary can be easily proven by using the Theorem \ref{true}
and Theorem \ref{ifandonlyif}.

\begin{corollary}
The following expressions are true.
\end{corollary}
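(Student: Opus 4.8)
The plan is to read each listed expression as an equality (equivalently, a two-sided continuous embedding) of two of the spaces $A_{\vartheta _{1},\vartheta _{2}}^{p,1,q,r}\left( G\right) $, and to reduce it to a conjunction of one-sided embeddings whose validity is already characterized. The unifying observation is that, by Theorem \ref{closedgraph}, any set-theoretic inclusion between two of these spaces is automatically a continuous embedding; hence an equality such as $A_{\vartheta _{1},\vartheta }^{p,1,q,r}\left( G\right) =A_{\vartheta _{2},\vartheta }^{p,1,q,r}\left( G\right) $ holds if and only if both $A_{\vartheta _{1},\vartheta }^{p,1,q,r}\left( G\right) \hookrightarrow A_{\vartheta _{2},\vartheta }^{p,1,q,r}\left( G\right) $ and the reverse embedding hold simultaneously. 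This lets me convert every claimed equality into a pair of embedding statements that Theorems \ref{true} and \ref{ifandonlyif} already dispose of.

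First I would treat the expressions that concern only the inner weight, with the outer weight held fixed. For such a statement I apply Theorem \ref{ifandonlyif} to each of the two embeddings: the first is equivalent to $\vartheta _{2}\prec \vartheta _{1}$ and the second to $\vartheta _{1}\prec \vartheta _{2}$. Combining the two yields $\vartheta _{1}\thickapprox \vartheta _{2}$ in the sense defined in the preliminaries, so the equality of the two spaces holds precisely when $\vartheta _{1}\thickapprox \vartheta _{2}$.

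For the expressions that also vary the indices, I would invoke Theorem \ref{true}: the sufficiency of index inequalities such as $p_{2}\leq p_{1}$, $q_{2}\leq q_{1}$, $r_{1}\leq r_{2}$ for the corresponding continuous embedding is exactly parts \textit{(i)}--\textit{(iv)} of that theorem, while the weight conditions $\vartheta _{3}\prec \vartheta _{1}$ and $\vartheta _{4}\prec \vartheta _{2}$ are supplied by Theorem \ref{ifandonlyif} as above. Chaining the index embedding from Theorem \ref{true} together with the weight embedding from Theorem \ref{ifandonlyif}, via transitivity of $\hookrightarrow $, produces each listed embedding; when both directions are available, pairing an embedding with its reverse delivers the asserted equality or ``if and only if'' form.

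The step I expect to be the main obstacle is the necessity direction for the weight conditions, namely deducing $\vartheta _{2}\prec \vartheta _{1}$ from the mere existence of a continuous embedding. This is precisely where one must lean on the quantitative two-sided control of the translation norm $\left\Vert T_{y}f\right\Vert _{\vartheta _{1},\vartheta }^{p,1,q,r}$ by $\vartheta _{1}\left( y\right) $ established in Theorem \ref{statement} and already exploited in the proof of Theorem \ref{ifandonlyif}. Everything else---the sufficiency directions and the transitivity bookkeeping---is a routine assembly of the embeddings proved earlier, which is why the corollary follows easily from Theorems \ref{true} and \ref{ifandonlyif}.
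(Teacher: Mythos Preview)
Your proposal is correct and follows essentially the same approach as the paper, which simply states that the corollary ``can be easily proven by using the Theorem \ref{true} and Theorem \ref{ifandonlyif}'' without giving further detail. Your elaboration---reducing each equality to a pair of embeddings via Theorem \ref{closedgraph}, then invoking Theorem \ref{true} for the sufficiency of the index and weight hypotheses and Theorem \ref{ifandonlyif} for the necessity direction in part \textit{(iv)}---is exactly what the paper intends. One minor imprecision: the sufficiency of the two-weight condition $\vartheta_{3}\prec\vartheta_{1}$, $\vartheta_{4}\prec\vartheta_{2}$ comes from Theorem \ref{true}\textit{(ii)} rather than from Theorem \ref{ifandonlyif} (which fixes the second weight), but since you already invoke Theorem \ref{true} this does not affect the argument.
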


\begin{enumerate}
\item[\textit{(i)}] The equality $A_{\vartheta _{1},\vartheta
_{2}}^{p,1,q,r}\left( G\right) =A_{\vartheta _{3},\vartheta
_{4}}^{p,1,q,r}\left( G\right) $ is satisfied if $\vartheta _{1}\thickapprox
\vartheta _{3}$ and $\vartheta _{2}\thickapprox \vartheta _{4}.$

\item[\textit{(ii)}] If $p_{2}\leq p_{1}$, $q_{2}\leq q_{1}$ and $r_{1}\leq
r_{2},$ then we get $A_{\vartheta _{1},\vartheta
_{2}}^{p_{1},1,q_{1},r_{1}}\left( G\right) \hookrightarrow A_{\vartheta
_{1},\vartheta _{2}}^{p_{1},1,q_{2},r_{2}}\left( G\right) .$

\item[\textit{(iii)}] Let $p_{2}\leq p_{1}$, $q_{2}\leq q_{1}$ and $%
r_{1}\leq r_{2}.$ If $\vartheta _{3}\prec \vartheta _{1}$ and $\vartheta
_{4}\prec \vartheta _{2},$ then we have $A_{\vartheta _{1},\vartheta
_{2}}^{p_{1},1,q_{1},r_{1}}\left( G\right) \hookrightarrow A_{\vartheta
_{3},\vartheta _{4}}^{p_{1},1,q_{2},r_{2}}\left( G\right) $.

\item[\textit{(iv)}] The expression $A_{\vartheta _{1},\vartheta
}^{p,1,q,r}\left( G\right) =A_{\vartheta _{2},\vartheta }^{p,1,q,r}\left(
G\right) $ holds if and only if $\vartheta _{2}\thickapprox \vartheta _{1}.$
\end{enumerate}

\section{Compact Embeddings of the Space $A_{\protect\vartheta _{1},\protect%
\vartheta _{2}}^{p,1,q,r}\left( 
\mathbb{R}
^{d}\right) $}

Now, we investigate compact embeddings of the spaces $A_{\vartheta
_{1},\vartheta _{2}}^{p,1,q,r}\left( G\right) $ with the similar methods in 
\cite{Gur2}. Also, we will take $G=%
\mathbb{R}
^{d}$ with Lebesgue measure $dx$ for compact embeddings.

\begin{theorem}
\label{vague}Let $1<p,q,r<\infty .$ Assume that $\left( f_{n}\right) _{n\in 
\mathbb{N}
}$ is a sequence in $A_{\vartheta _{1},\vartheta _{2}}^{p,1,q,r}\left( 
\mathbb{R}
^{d}\right) $. If $\left( f_{n}\right) _{n\in 
\mathbb{N}
}$ converges to zero in $A_{\vartheta _{1},\vartheta _{2}}^{p,1,q,r}\left( 
\mathbb{R}
^{d}\right) $, then $\left( f_{n}\right) _{n\in 
\mathbb{N}
}$ converges to zero in the vague topology (which means that%
\begin{equation*}
\int\limits_{%
\mathbb{R}
^{d}}f_{n}(x)k(x)dx\longrightarrow 0
\end{equation*}%
for $n\longrightarrow \infty $ for all $k\in C_{c}(%
\mathbb{R}
^{d})$,see \cite{Dieu}).
\end{theorem}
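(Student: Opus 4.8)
The plan is to reduce the vague convergence to the continuous embedding $A_{\vartheta _{1},\vartheta _{2}}^{p,1,q,r}(\mathbb{R}^{d})\hookrightarrow L^{1}(\mathbb{R}^{d})$ that was already recorded in the proof that $A_{\vartheta _{1},\vartheta _{2}}^{p,1,q,r}$ is a BF-space. First I would fix an arbitrary test function $k\in C_{c}(\mathbb{R}^{d})$, writing $K=\operatorname{supp}k$ for its (compact) support and $\left\Vert k\right\Vert _{\infty }$ for its supremum norm; both are finite since $k$ is continuous with compact support. The integral $\int_{\mathbb{R}^{d}}f_{n}k\,dx$ is well defined because each $f_{n}$ lies in $\left( L_{\vartheta _{1}}^{p},\ell ^{1}\right) \subset L^{1}(\mathbb{R}^{d})$, so $f_{n}k\in L^{1}$.

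The key estimate is a single chain of elementary bounds. Using only that $\left\vert k\right\vert \leq \left\Vert k\right\Vert _{\infty }\chi _{K}$ together with the triangle inequality for integrals, I would write
\begin{equation*}
\left\vert \int\limits_{\mathbb{R}^{d}}f_{n}(x)k(x)\,dx\right\vert \leq \left\Vert k\right\Vert _{\infty }\int\limits_{K}\left\vert f_{n}(x)\right\vert \,dx\leq \left\Vert k\right\Vert _{\infty }\left\Vert f_{n}\right\Vert _{1}.
\end{equation*}
Then I would invoke the same inequality chain used in the BF-space theorem, namely $\left\Vert f_{n}\right\Vert _{1}\leq \left\Vert f_{n}\right\Vert _{1,\vartheta _{1}}\leq \left\Vert f_{n}\right\Vert _{p1,\vartheta _{1}}\leq \left\Vert f_{n}\right\Vert _{\vartheta _{1},\vartheta _{2}}^{p,1,q,r}$, which rests on $\vartheta _{1}\geq 1$ (giving $\left\Vert \cdot \right\Vert _{1}\leq \left\Vert \cdot \right\Vert _{1,\vartheta _{1}}$) and on the local continuous embedding $L_{\vartheta _{1}}^{p}\hookrightarrow L_{\vartheta _{1}}^{1}$. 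Combining these yields
\begin{equation*}
\left\vert \int\limits_{\mathbb{R}^{d}}f_{n}(x)k(x)\,dx\right\vert \leq \left\Vert k\right\Vert _{\infty }\,\left\Vert f_{n}\right\Vert _{\vartheta _{1},\vartheta _{2}}^{p,1,q,r}.
\end{equation*}

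Finally, since $\left\Vert f_{n}\right\Vert _{\vartheta _{1},\vartheta _{2}}^{p,1,q,r}\longrightarrow 0$ by hypothesis and $\left\Vert k\right\Vert _{\infty }$ is a constant independent of $n$, the right-hand side tends to $0$; as $k\in C_{c}(\mathbb{R}^{d})$ was arbitrary, this is precisely vague convergence to zero. I do not anticipate a genuine obstacle: the statement amounts to the fact that the $A$-norm dominates the $L^{1}$-norm and hence controls the pairing against any fixed $C_{c}$ function. The only point requiring care is to observe that the compact support and boundedness of $k$ are exactly what permit passing from an arbitrary test integral to the global $L^{1}$-norm of $f_{n}$.
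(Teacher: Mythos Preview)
Your argument is correct and follows essentially the same pattern as the paper's proof: bound the pairing $\left\vert \int f_{n}k\right\vert$ by a fixed constant (depending only on $k$) times a norm of $f_{n}$ that is dominated by $\left\Vert f_{n}\right\Vert _{\vartheta _{1},\vartheta _{2}}^{p,1,q,r}$, and then let $n\to\infty$.

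The only difference is in which duality pairing is used. The paper applies H\"{o}lder's inequality with exponents $p$ and $p'$, obtaining $\left\vert \int f_{n}k\right\vert \leq \left\Vert k\right\Vert _{p',\vartheta _{1}}\left\Vert f_{n}\right\Vert _{p,\vartheta _{1}}$ and then invokes the continuous embedding $\left( L_{\vartheta _{1}}^{p},\ell ^{1}\right) \hookrightarrow L_{\vartheta _{1}}^{p}$. You instead use the $L^{\infty}$--$L^{1}$ pairing and the embedding $\left( L_{\vartheta _{1}}^{p},\ell ^{1}\right) \hookrightarrow L^{1}$ already established in the BF-space theorem. Your route is marginally more elementary (it avoids weighted H\"{o}lder and works even for $p=1$), while the paper's version makes explicit the role of the hypothesis $p>1$ through the dual exponent $p'$; in substance the two arguments are interchangeable.
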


\begin{proof}
Let $k\in C_{c}(%
\mathbb{R}
^{d})$. For $p>1,$ since the space $\left( L_{\vartheta _{1}}^{p},\ell
^{1}\right) $ is continuously embedded in $L_{\vartheta _{1}}^{p}\left( 
\mathbb{R}
^{d}\right) ,$ we have%
\begin{eqnarray*}
\left\vert \dint\limits_{%
\mathbb{R}
^{d}}f_{n}(x)k(x)dx\right\vert &\leq &\left\Vert k\right\Vert _{p^{\prime
},\vartheta _{1}}\left\Vert f_{n}\right\Vert _{p,\vartheta _{1}}\leq
C\left\Vert k\right\Vert _{p^{\prime },\vartheta _{1}}\left\Vert
f_{n}\right\Vert _{p1,\vartheta _{1}} \\
&\leq &\left\Vert k\right\Vert _{p^{\prime },\vartheta _{1}}\left\Vert
f_{n}\right\Vert _{\vartheta _{1},\vartheta _{2}}^{p,1,q,r}
\end{eqnarray*}%
where $\frac{1}{p^{\prime }}+\frac{1}{p}=1$ by the H\"{o}lder inequality.
Therefore, the sequence $\left( f_{n}\right) _{n\in 
\mathbb{N}
}$ converges to zero in vague topology.
\end{proof}

\begin{theorem}
\label{tendtozero}If $\vartheta \prec \vartheta _{1}$ and $\frac{\vartheta
\left( x\right) }{\vartheta _{1}\left( x\right) }$ doesn't tend to zero in $%
\mathbb{R}
^{d}$ as $x\longrightarrow \infty ,$ then the embedding of the space $%
A_{\vartheta _{1},\vartheta _{2}}^{p,1,q,r}\left( 
\mathbb{R}
^{d}\right) $ into $\left( L_{\vartheta }^{p},\ell ^{1}\right) $ is never
compact.
\end{theorem}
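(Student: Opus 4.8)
The plan is to establish non-compactness in the standard way: exhibit a sequence that is bounded in $A_{\vartheta_1,\vartheta_2}^{p,1,q,r}(\mathbb{R}^d)$ but has no subsequence converging in $(L_\vartheta^p,\ell^1)$. First I would unwind the hypothesis: since $\vartheta(x)/\vartheta_1(x)$ does not tend to $0$, there are $\delta>0$ and points $x_n$ with $|x_n|\to\infty$ and $\vartheta(x_n)/\vartheta_1(x_n)\ge\delta$ for all $n$. Fixing a nonzero $f_0\in C_c(\mathbb{R}^d)\subset A_{\vartheta_1,\vartheta_2}^{p,1,q,r}(\mathbb{R}^d)$, I set $f_n=\vartheta_1(x_n)^{-1}T_{x_n}f_0$. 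The normalization is chosen precisely so that the domain norms stay bounded while the target norms are controlled from below by the ratio $\vartheta(x_n)/\vartheta_1(x_n)$.

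Boundedness in the domain is immediate from the upper estimate in Theorem~\ref{statement}(i): one has $\|T_{x_n}f_0\|_{\vartheta_1,\vartheta_2}^{p,1,q,r}\le\vartheta_1(x_n)\,\|f_0\|_{\vartheta_1,\vartheta_2}^{p,1,q,r}$, so $\|f_n\|_{\vartheta_1,\vartheta_2}^{p,1,q,r}\le\|f_0\|_{\vartheta_1,\vartheta_2}^{p,1,q,r}$. Next I would check that $f_n\to 0$ in the vague topology of Theorem~\ref{vague}: for any $k\in C_c(\mathbb{R}^d)$ the support of $f_n$ is $x_n+\mathrm{supp}\,f_0$, which escapes to infinity, so $\int_{\mathbb{R}^d}f_n(x)k(x)\,dx=0$ for all large $n$. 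Hence any target-norm limit of a subsequence would necessarily be $0$.

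The crux is the lower bound in $(L_\vartheta^p,\ell^1)$. Here I would apply the lower estimate of Theorem~\ref{statement}(i) with the weight pair $(\vartheta,\vartheta_2)$, which is legitimate because $f_0\in A_{\vartheta,\vartheta_2}^{p,1,q,r}(\mathbb{R}^d)$ as well; this gives a constant $C(f_0)>0$ with $C(f_0)\,\vartheta(x_n)\le\|T_{x_n}f_0\|_{\vartheta,\vartheta_2}^{p,1,q,r}=\|T_{x_n}f_0\|_{p1,\vartheta}+\|\widehat{f_0}\|_{qr,\vartheta_2}$, where the Fourier part is independent of $n$ by strong character invariance. Since $\vartheta(x_n)\to\infty$, for large $n$ the constant term is absorbed and $\|T_{x_n}f_0\|_{p1,\vartheta}\ge\tfrac12 C(f_0)\vartheta(x_n)$, whence $\|f_n\|_{p1,\vartheta}\ge\tfrac12 C(f_0)\,\vartheta(x_n)/\vartheta_1(x_n)\ge\tfrac12 C(f_0)\delta>0$. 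This is the step where the hypothesis $\vartheta/\vartheta_1\not\to0$ is genuinely used, and I expect it to be the main obstacle, since it is what prevents the escaping mass from being damped out in the target norm.

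Finally I would conclude by contradiction. If the embedding were compact, a subsequence $f_{n_k}$ would converge in $(L_\vartheta^p,\ell^1)$ to some $f$; as an amalgam space embeds continuously into $L^1_{loc}$, this forces convergence of $\int f_{n_k}k\,dx$ to $\int fk\,dx$ for every $k\in C_c(\mathbb{R}^d)$, and combined with the vague convergence to $0$ this gives $f=0$. But then $\|f_{n_k}\|_{p1,\vartheta}\to 0$, contradicting the uniform lower bound $\tfrac12 C(f_0)\delta>0$. Therefore the embedding is never compact.
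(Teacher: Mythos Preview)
Your proposal is correct and follows essentially the same strategy as the paper: the test sequence $f_n=\vartheta_1(x_n)^{-1}T_{x_n}f$, boundedness via Theorem~\ref{statement}(i), vague convergence to $0$ to pin down the only possible limit, and a uniform lower bound on $\|f_n\|_{p1,\vartheta}$ coming from the ratio $\vartheta(x_n)/\vartheta_1(x_n)\ge\delta$.

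There are two small differences in execution. First, you take $f_0\in C_c(\mathbb{R}^d)$ and get vague convergence by disjoint supports; the paper takes an arbitrary $f\in A_{\vartheta_1,\vartheta_2}^{p,1,q,r}$ and uses H\"older to estimate $\int f_n k$. Your choice is slightly simpler. Second, for the lower bound the paper goes directly through the $L^p_\vartheta$ norm: it uses $(L_\vartheta^p,\ell^1)\hookrightarrow L_\vartheta^p$ together with the estimate $\|T_yf\|_{p,\vartheta}\ge C_2\,\vartheta(y)$ from \cite[Lemma~2.2]{Feich3} to obtain $\|T_{x_n}f\|_{p1,\vartheta}\ge C\,\vartheta(x_n)$ with no subtraction needed. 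Your route---applying the lower bound of Theorem~\ref{statement}(i) to the full $A_{\vartheta,\vartheta_2}^{p,1,q,r}$ norm and then peeling off the constant Fourier contribution using $\vartheta(x_n)\to\infty$---works under the paper's standing convention on weights, but is a detour; the paper's direct appeal to the $L^p_\vartheta$ lower bound avoids the absorption step entirely.
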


\begin{proof}
Since $\vartheta \prec \vartheta _{1}$, there exists $C_{1}>0$ such that $%
\vartheta (x)\leq C_{1}\vartheta _{1}(x)$ for all $x\in 
\mathbb{R}
^{d}$. This follows that $A_{\vartheta _{1},\vartheta _{2}}^{p,1,q,r}\left( 
\mathbb{R}
^{d}\right) \subset \left( L_{\vartheta }^{p},\ell ^{1}\right) $. Assume
that $\left( t_{n}\right) _{n\in 
\mathbb{N}
}$ is a sequence with $t_{n}\longrightarrow \infty $ in $%
\mathbb{R}
^{d}$. Since $\frac{\vartheta (x)}{\vartheta _{1}\left( x\right) }$ does not
tend to zero as $x\longrightarrow \infty $, there exists $\delta >0$ such
that $\frac{\vartheta (x)}{\vartheta _{1}\left( x\right) }\geq \delta >0$
for $x\longrightarrow \infty .$ To end the proof, we take any fixed $f\in
A_{\vartheta _{1},\vartheta _{2}}^{p,1,q,r}\left( 
\mathbb{R}
^{d}\right) $ and define a sequence of functions $\left( f_{n}\right) _{n\in 
\mathbb{N}
}$ where $f_{n}=\left( \vartheta _{1}\left( t_{n}\right) \right)
^{-1}T_{t_{n}}f$. This sequence is bounded in $A_{\vartheta _{1},\vartheta
_{2}}^{p,1,q,r}\left( 
\mathbb{R}
^{d}\right) $. Indeed, by Theorem \ref{statement}, we get 
\begin{eqnarray*}
\left\Vert f_{n}\right\Vert _{\vartheta _{1},\vartheta _{2}}^{p,1,q,r}
&=&\left( \vartheta _{1}\left( t_{n}\right) \right) ^{-1}\left\Vert
T_{t_{n}}f\right\Vert _{\vartheta _{1},\vartheta _{2}}^{p,1,q,r} \\
&\leq &\left( \vartheta _{1}\left( t_{n}\right) \right) ^{-1}\vartheta
_{1}\left( t_{n}\right) \left\Vert f\right\Vert _{\vartheta _{1},\vartheta
_{2}}^{p,1,q,r}=\left\Vert f\right\Vert _{\vartheta _{1},\vartheta
_{2}}^{p,1,q,r}
\end{eqnarray*}

Now, we will prove that there would not exists norm convergence of
subsequence of $\left( f_{n}\right) _{n\in 
\mathbb{N}
}$ in $\left( L_{\vartheta }^{p},\ell ^{1}\right) $. The sequence mentioned
above converges to zero in sense to the vague topology. To prove this, for
every $k\in C_{c}(%
\mathbb{R}
^{d})$ we obtain%
\begin{eqnarray}
\left\vert \dint\limits_{%
\mathbb{R}
^{d}}f_{n}(x)k(x)dx\right\vert &\leq &\frac{1}{\vartheta _{1}\left(
t_{n}\right) }\left\Vert k\right\Vert _{p^{\prime },\vartheta
_{1}}\left\Vert f_{n}\right\Vert _{p,\vartheta _{1}}  \notag \\
&\leq &\frac{C_{1}}{\vartheta _{1}\left( t_{n}\right) }\left\Vert
k\right\Vert _{p^{\prime },\vartheta _{1}}\left\Vert f_{n}\right\Vert
_{\vartheta _{1},\vartheta _{2}}^{p,1,q,r}  \label{tendtozero1}
\end{eqnarray}%
where $\frac{1}{p}+\frac{1}{p^{\prime }}=1$ and $C_{1}>0.$ Since the right
side of (\ref{tendtozero1}) tends zero for $n\longrightarrow \infty ,$ we
have 
\begin{equation*}
\dint\limits_{%
\mathbb{R}
^{d}}f_{n}(x)k(x)dx\longrightarrow 0.
\end{equation*}%
If we consider the Theorem \ref{vague}, the only possible limit of $\left(
f_{n}\right) _{n\in 
\mathbb{N}
}$ in $\left( L_{\vartheta }^{p},\ell ^{1}\right) $ is zero. This follows by
Lemma 2.2 in \cite{Feich3} that there exist $C_{2},C_{3}>0$ depending on $f$
such that%
\begin{equation*}
C_{2}\vartheta \left( y\right) \leq \left\Vert T_{y}f\right\Vert
_{p,\vartheta }\leq C_{3}\vartheta \left( y\right)
\end{equation*}%
for all $y\in 
\mathbb{R}
^{d}.$ Thus we have%
\begin{eqnarray}
\left\Vert f_{n}\right\Vert _{p1,\vartheta } &=&\left( \vartheta _{1}\left(
t_{n}\right) \right) ^{-1}\left\Vert T_{t_{n}}f\right\Vert _{p1,\vartheta
}\geq \frac{1}{C_{1}}\left( \vartheta _{1}\left( t_{n}\right) \right)
^{-1}\left\Vert T_{t_{n}}f\right\Vert _{p,\vartheta }  \notag \\
&\geq &\frac{C_{2}}{C_{1}}\left( \vartheta _{1}\left( t_{n}\right) \right)
^{-1}\vartheta \left( y\right) .  \label{nevercom}
\end{eqnarray}%
Since $\frac{\vartheta \left( t_{n}\right) }{\vartheta _{1}\left(
t_{n}\right) }\geq \delta >0$ for all $t_{n}$, by using (\ref{nevercom}) we
get%
\begin{equation*}
\left\Vert f_{n}\right\Vert _{p1,\vartheta }\geq C\left( \vartheta
_{1}\left( t_{n}\right) \right) ^{-1}\vartheta \left( y\right) \geq C\delta
\end{equation*}%
where $C=\frac{C_{2}}{C_{1}}.$ Thus there would not possible to find norm
convergent subsequence of $\left( f_{n}\right) _{n\in 
\mathbb{N}
}$ in $\left( L_{\vartheta }^{p},\ell ^{1}\right) $. This completes the
proof.
\end{proof}

\begin{theorem}
If $\vartheta _{3}\prec \vartheta _{1}$ and $\frac{\vartheta _{3}\left(
y\right) }{\vartheta _{1}\left( y\right) }$ doesn't tend to zero in $%
\mathbb{R}
^{d},$ then the embedding $i:$ $A_{\vartheta _{1},\vartheta
_{2}}^{p,1,q,r}\left( 
\mathbb{R}
^{d}\right) \hookrightarrow A_{\vartheta _{3},\vartheta
_{2}}^{p,1,q,r}\left( 
\mathbb{R}
^{d}\right) $ is never compact.
\end{theorem}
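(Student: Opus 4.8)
The plan is to mimic the proof of Theorem \ref{tendtozero}. The previous theorem shows that under exactly the hypotheses $\vartheta_{3}\prec\vartheta_{1}$ and $\frac{\vartheta_{3}(y)}{\vartheta_{1}(y)}\not\to 0$, the embedding of $A_{\vartheta_{1},\vartheta_{2}}^{p,1,q,r}(\mathbb{R}^{d})$ into $\left(L_{\vartheta_{3}}^{p},\ell^{1}\right)$ is never compact. The key observation is that the target space $A_{\vartheta_{3},\vartheta_{2}}^{p,1,q,r}(\mathbb{R}^{d})$ carries a \emph{stronger} norm than $\left(L_{\vartheta_{3}}^{p},\ell^{1}\right)$: indeed, by the definition of the norm $\left\Vert\cdot\right\Vert_{\vartheta_{3},\vartheta_{2}}^{p,1,q,r}$ and the continuous embedding of the amalgam into the weighted Lebesgue space, one has $\left\Vert f\right\Vert_{p,\vartheta_{3}}\leq C\left\Vert f\right\Vert_{p1,\vartheta_{3}}\leq C\left\Vert f\right\Vert_{\vartheta_{3},\vartheta_{2}}^{p,1,q,r}$. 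Hence norm convergence in $A_{\vartheta_{3},\vartheta_{2}}^{p,1,q,r}$ is at least as strong as norm convergence in $\left(L_{\vartheta_{3}}^{p},\ell^{1}\right)$, so a sequence that fails to have a norm-convergent subsequence in the weaker space certainly fails in the stronger one.

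**Key steps in order.**

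First I would record that $\vartheta_{3}\prec\vartheta_{1}$ yields the inclusion $A_{\vartheta_{1},\vartheta_{2}}^{p,1,q,r}(\mathbb{R}^{d})\subset A_{\vartheta_{3},\vartheta_{2}}^{p,1,q,r}(\mathbb{R}^{d})$, which is continuous by Theorem \ref{true}(ii), so the embedding $i$ is well defined and bounded. Second, I would fix any $0\neq f\in A_{\vartheta_{1},\vartheta_{2}}^{p,1,q,r}(\mathbb{R}^{d})$, pick a sequence $t_{n}\to\infty$, and set $f_{n}=\left(\vartheta_{1}(t_{n})\right)^{-1}T_{t_{n}}f$. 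Exactly as in Theorem \ref{tendtozero}, Theorem \ref{statement} gives that $\left(f_{n}\right)_{n\in\mathbb{N}}$ is bounded in $A_{\vartheta_{1},\vartheta_{2}}^{p,1,q,r}(\mathbb{R}^{d})$, so it is a legitimate candidate sequence for testing compactness of $i$. Third, I would argue that $\left(f_{n}\right)$ converges to zero in the vague topology by the same Hölder estimate as before (the factor $\left(\vartheta_{1}(t_{n})\right)^{-1}$ drives the integral to zero), so by Theorem \ref{vague} the only possible limit of any norm-convergent subsequence is $0$. Finally, I would produce the contradiction by bounding $\left\Vert f_{n}\right\Vert$ from below away from $0$: using Lemma 2.2 of \cite{Feich3} with weight $\vartheta_{3}$ to get $C_{2}\vartheta_{3}(t_{n})\leq\left\Vert T_{t_{n}}f\right\Vert_{p,\vartheta_{3}}$, together with $\left\Vert f\right\Vert_{p,\vartheta_{3}}\leq C\left\Vert f\right\Vert_{\vartheta_{3},\vartheta_{2}}^{p,1,q,r}$ and $\frac{\vartheta_{3}(t_{n})}{\vartheta_{1}(t_{n})}\geq\delta>0$, one obtains
\begin{equation*}
\left\Vert f_{n}\right\Vert_{\vartheta_{3},\vartheta_{2}}^{p,1,q,r}\geq C'\left\Vert f_{n}\right\Vert_{p,\vartheta_{3}}\geq C''\frac{\vartheta_{3}(t_{n})}{\vartheta_{1}(t_{n})}\geq C''\delta>0.
\end{equation*}
Since no subsequence can converge to $0$ in norm while every possible vague limit is $0$, there is no norm-convergent subsequence, and $i$ is not compact.

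**Expected main obstacle.**

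The routine parts (boundedness of $\left(f_{n}\right)$, vague convergence) transfer verbatim from Theorem \ref{tendtozero}. The one step deserving care is the lower bound in the target norm: I must make sure that the full $A$-norm $\left\Vert\cdot\right\Vert_{\vartheta_{3},\vartheta_{2}}^{p,1,q,r}$ dominates the $\left(L_{\vartheta_{3}}^{p},\ell^{1}\right)$-norm (and hence the $L_{\vartheta_{3}}^{p}$-norm), so that the lower estimate coming from Lemma 2.2 of \cite{Feich3} in the weaker space actually forces a lower bound in the stronger space. This is exactly where the embedding $\left(L_{\vartheta_{3}}^{p},\ell^{1}\right)\hookrightarrow L_{\vartheta_{3}}^{p}(\mathbb{R}^{d})$ (valid for $p>1$) must be invoked, and it is the only place where the argument is genuinely more than a transcription of the preceding proof.
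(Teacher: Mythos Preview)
Your proposal is correct and rests on the same key observation as the paper: the norm $\left\Vert\cdot\right\Vert_{\vartheta_{3},\vartheta_{2}}^{p,1,q,r}$ dominates $\left\Vert\cdot\right\Vert_{p1,\vartheta_{3}}$, so non-compactness into the weaker target $\left(L_{\vartheta_{3}}^{p},\ell^{1}\right)$ forces non-compactness into the stronger target $A_{\vartheta_{3},\vartheta_{2}}^{p,1,q,r}$. The only difference is packaging. The paper argues abstractly: it invokes Theorem~\ref{tendtozero} as a black box and observes that if $i$ were compact, then composing with the continuous inclusion $A_{\vartheta_{3},\vartheta_{2}}^{p,1,q,r}\hookrightarrow\left(L_{\vartheta_{3}}^{p},\ell^{1}\right)$ would yield a compact embedding of $A_{\vartheta_{1},\vartheta_{2}}^{p,1,q,r}$ into $\left(L_{\vartheta_{3}}^{p},\ell^{1}\right)$, contradicting Theorem~\ref{tendtozero}. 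You instead re-run the explicit construction of Theorem~\ref{tendtozero}, producing the translated sequence $f_{n}=\left(\vartheta_{1}(t_{n})\right)^{-1}T_{t_{n}}f$ and verifying directly that it is bounded, vaguely null, and bounded below in the $A_{\vartheta_{3},\vartheta_{2}}^{p,1,q,r}$-norm. Your route is a bit longer but entirely self-contained; the paper's route is shorter but relies on having Theorem~\ref{tendtozero} already available. Substantively they are the same argument.
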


\begin{proof}
Since $\vartheta _{3}\prec \vartheta _{1}$, then it is clear that $%
A_{\vartheta _{1},\vartheta _{2}}^{p,1,q,r}\left( 
\mathbb{R}
^{d}\right) \subset A_{\vartheta _{3},\vartheta _{2}}^{p,1,q,r}\left( 
\mathbb{R}
^{d}\right) $. It is also known by Theorem \ref{closedgraph} that the unit
map from $A_{\vartheta _{1},\vartheta _{2}}^{p,1,q,r}\left( 
\mathbb{R}
^{d}\right) $ into $A_{\vartheta _{3},\vartheta _{2}}^{p,1,q,r}\left( 
\mathbb{R}
^{d}\right) $ is continuous. Now take any bounded sequence of $\left(
f_{n}\right) _{n\in 
\mathbb{N}
}$ in $A_{\vartheta _{1},\vartheta _{2}}^{p,1,q,r}\left( 
\mathbb{R}
^{d}\right) $. If there exists norm convergent subsequence of $\left(
f_{n}\right) _{n\in 
\mathbb{N}
}$ in $A_{\vartheta _{3},\vartheta _{2}}^{p,1,q,r}\left( 
\mathbb{R}
^{d}\right) $, this subsequence also converges in $\left( L_{\vartheta
_{3}}^{p},\ell ^{1}\right) $. But this is a contradiction because of the
fact that the embedding of the space $A_{\vartheta _{1},\vartheta
_{2}}^{p,1,q,r}\left( 
\mathbb{R}
^{d}\right) $ into $\left( L_{\vartheta _{3}}^{p},\ell ^{1}\right) $ is
never compact by the Theorem \ref{tendtozero}.
\end{proof}

Note that if we define the sequence in Theorem \ref{tendtozero} as $%
f_{n}=\left( \vartheta _{2}\left( t_{n}\right) \right) ^{-1}T_{t_{n}}f,$
then the proof of the following theorem is similar with the previous one.

\begin{theorem}
If $\vartheta _{1}\prec \vartheta _{2},\vartheta _{3}\prec \vartheta _{2}$
and $\frac{\vartheta _{3}\left( x\right) }{\vartheta _{2}\left( x\right) }$
does not tend to zero in $%
\mathbb{R}
^{d}$ as $x\longrightarrow \infty ,$ then the embedding of the space $%
A_{\vartheta _{1},\vartheta _{2}}^{p,1,q,r}\left( 
\mathbb{R}
^{d}\right) $ into $\left( L_{\vartheta _{3}}^{p},\ell ^{1}\right) $ is
never compact.
\end{theorem}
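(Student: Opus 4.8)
The plan is to follow the scheme of Theorem~\ref{tendtozero} line by line, replacing the normalizing weight $\vartheta _{1}$ by $\vartheta _{2}$ exactly as the preceding note prescribes. First I would note that, as in the opening of Theorem~\ref{tendtozero}, the weight hypotheses make the inclusion $A_{\vartheta _{1},\vartheta _{2}}^{p,1,q,r}(\mathbb{R}^{d})\subseteq (L_{\vartheta _{3}}^{p},\ell ^{1})$ a continuous embedding, so that asking for its compactness is meaningful. I would then fix a nonzero $f\in A_{\vartheta _{1},\vartheta _{2}}^{p,1,q,r}(\mathbb{R}^{d})$ (one may take a function whose Fourier transform is rapidly decreasing, e.g. $f\in C_{c}^{\infty }(\mathbb{R}^{d})$, so that $f$ lies in both spaces), choose a sequence $(t_{n})_{n}$ in $\mathbb{R}^{d}$ with $t_{n}\rightarrow \infty $, and put $f_{n}=(\vartheta _{2}(t_{n}))^{-1}T_{t_{n}}f$. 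The aim is to exhibit $(f_{n})_{n}$ as a bounded sequence in the domain having no norm-convergent subsequence in $(L_{\vartheta _{3}}^{p},\ell ^{1})$.

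Boundedness in the domain is where the hypothesis $\vartheta _{1}\prec \vartheta _{2}$ enters: by the upper estimate of Theorem~\ref{statement}(i), $\Vert T_{t_{n}}f\Vert _{\vartheta _{1},\vartheta _{2}}^{p,1,q,r}\leq \vartheta _{1}(t_{n})\Vert f\Vert _{\vartheta _{1},\vartheta _{2}}^{p,1,q,r}$, and since $\vartheta _{1}(t_{n})\leq C\vartheta _{2}(t_{n})$ one gets $\Vert f_{n}\Vert _{\vartheta _{1},\vartheta _{2}}^{p,1,q,r}\leq C\Vert f\Vert _{\vartheta _{1},\vartheta _{2}}^{p,1,q,r}$. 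Next I would check that $f_{n}\rightarrow 0$ in the vague topology: pairing against a fixed $k\in C_{c}(\mathbb{R}^{d})$ reduces to the mass of the $L^{1}$-function $f$ over the sets $\mathrm{supp}(k)-t_{n}$, which escape to infinity, and this is only further damped by the factor $(\vartheta _{2}(t_{n}))^{-1}\leq 1$; hence $\int_{\mathbb{R}^{d}}f_{n}k\,dx\rightarrow 0$. By Theorem~\ref{vague} the unique possible limit of any norm-convergent subsequence of $(f_{n})_{n}$ in $(L_{\vartheta _{3}}^{p},\ell ^{1})$ is therefore $0$.

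It remains to bound $\Vert f_{n}\Vert _{p1,\vartheta _{3}}$ away from zero, which I expect to be the crux. Using $\ell ^{1}\hookrightarrow \ell ^{p}$ one has $\Vert f_{n}\Vert _{p1,\vartheta _{3}}\geq \Vert f_{n}\Vert _{p,\vartheta _{3}}$, while Lemma~2.2 of \cite{Feich3} furnishes constants $C_{2},C_{3}>0$ depending on $f$ with $C_{2}\vartheta _{3}(y)\leq \Vert T_{y}f\Vert _{p,\vartheta _{3}}\leq C_{3}\vartheta _{3}(y)$; together these give $\Vert f_{n}\Vert _{p1,\vartheta _{3}}\geq C_{2}\,\vartheta _{3}(t_{n})/\vartheta _{2}(t_{n})$. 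Because $\vartheta _{3}/\vartheta _{2}$ does not tend to zero, there is a $\delta >0$ with $\vartheta _{3}(t_{n})/\vartheta _{2}(t_{n})\geq \delta $ along the chosen sequence, so $\Vert f_{n}\Vert _{p1,\vartheta _{3}}\geq C_{2}\delta >0$; this contradicts convergence to the only admissible limit $0$, and the embedding cannot be compact. The main obstacle I anticipate is precisely this lower bound: one must ensure $f\in L_{\vartheta _{3}}^{p}$ so that the Feichtinger--G\"{u}rkanli two-sided estimate applies to $T_{t_{n}}f$, pass correctly from the amalgam norm $\Vert \cdot \Vert _{p1,\vartheta _{3}}$ to the plain weighted $L^{p}$-norm, and use the non-vanishing of $\vartheta _{3}/\vartheta _{2}$ along the divergent sequence $(t_{n})$ rather than merely as a limiting statement.
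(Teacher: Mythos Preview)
Your proposal is correct and follows precisely the route the paper indicates: the paper's entire ``proof'' is the one-line remark preceding the theorem that one should rerun the argument of Theorem~\ref{tendtozero} with the test sequence $f_{n}=(\vartheta_{2}(t_{n}))^{-1}T_{t_{n}}f$ in place of $(\vartheta_{1}(t_{n}))^{-1}T_{t_{n}}f$, and you carry this out step for step. Your care in selecting $f$ so that it certainly lies in $L_{\vartheta_{3}}^{p}$ (needed for the Feichtinger--G\"urkanli two-sided estimate) is a point the paper glosses over; note, however, that your opening claim that ``the weight hypotheses make the inclusion $A_{\vartheta_{1},\vartheta_{2}}^{p,1,q,r}\subseteq (L_{\vartheta_{3}}^{p},\ell^{1})$ a continuous embedding'' is not actually derivable from $\vartheta_{1}\prec\vartheta_{2}$ and $\vartheta_{3}\prec\vartheta_{2}$ alone---this is a lacuna you inherit from the paper's own formulation rather than a defect of your argument.
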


\begin{theorem}
The embedding of the space $A_{\vartheta _{1},\vartheta
_{2}}^{p,1,q,r}\left( 
\mathbb{R}
^{d}\right) $ into $A_{\vartheta _{3},\vartheta _{4}}^{p,1,q,r}\left( 
\mathbb{R}
^{d}\right) $ is never compact if

\begin{enumerate}
\item[\textit{(i)}] $\vartheta _{4}\prec \vartheta _{2}\prec \vartheta
_{1},\vartheta _{3}\prec \vartheta _{1}$ and $\frac{\vartheta _{3}\left(
x\right) }{\vartheta _{1}\left( x\right) }$ does not tend to zero in $%
\mathbb{R}
^{d}$ as $x\longrightarrow \infty ,$ or

\item[\textit{(ii)}] $\vartheta _{3}\prec \vartheta _{1}\prec \vartheta
_{2},\vartheta _{4}\prec \vartheta _{2}$ and $\frac{\vartheta _{3}\left(
x\right) }{\vartheta _{2}\left( x\right) }$ does not tend to zero in $%
\mathbb{R}
^{d}$ as $x\longrightarrow \infty .$
\end{enumerate}
\end{theorem}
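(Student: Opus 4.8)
The plan is to reduce both cases to the non-compactness results already proved for embeddings into a single weighted amalgam $\left( L_{\vartheta _{3}}^{p},\ell ^{1}\right) $, using that the target space $A_{\vartheta _{3},\vartheta _{4}}^{p,1,q,r}\left( \mathbb{R}^{d}\right) $ sits continuously inside $\left( L_{\vartheta _{3}}^{p},\ell ^{1}\right) $. First I would check that the embedding $i$ is well defined and bounded: in both (i) and (ii) one has $\vartheta _{3}\prec \vartheta _{1}$ and $\vartheta _{4}\prec \vartheta _{2}$, so Theorem \ref{true}\textit{(ii)} yields the continuous inclusion $A_{\vartheta _{1},\vartheta _{2}}^{p,1,q,r}\left( \mathbb{R}^{d}\right) \hookrightarrow A_{\vartheta _{3},\vartheta _{4}}^{p,1,q,r}\left( \mathbb{R}^{d}\right) $. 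Next I would record the elementary observation that
\[
\left\Vert f\right\Vert _{p1,\vartheta _{3}}\leq \left\Vert f\right\Vert _{p1,\vartheta _{3}}+\left\Vert \widehat{f}\right\Vert _{qr,\vartheta _{4}}=\left\Vert f\right\Vert _{\vartheta _{3},\vartheta _{4}}^{p,1,q,r},
\]
so that the identity map $A_{\vartheta _{3},\vartheta _{4}}^{p,1,q,r}\left( \mathbb{R}^{d}\right) \hookrightarrow \left( L_{\vartheta _{3}}^{p},\ell ^{1}\right) $ is continuous; hence any sequence converging in the $A_{\vartheta _{3},\vartheta _{4}}^{p,1,q,r}$-norm also converges in $\left( L_{\vartheta _{3}}^{p},\ell ^{1}\right) $.

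The core argument is a contradiction by composition, exactly as in the proof of the preceding theorem. Suppose $i$ were compact and take a bounded sequence $\left( f_{n}\right) $ in $A_{\vartheta _{1},\vartheta _{2}}^{p,1,q,r}\left( \mathbb{R}^{d}\right) $; compactness produces a subsequence converging in $A_{\vartheta _{3},\vartheta _{4}}^{p,1,q,r}\left( \mathbb{R}^{d}\right) $, and by the inequality above this subsequence converges in $\left( L_{\vartheta _{3}}^{p},\ell ^{1}\right) $. Thus the embedding $A_{\vartheta _{1},\vartheta _{2}}^{p,1,q,r}\left( \mathbb{R}^{d}\right) \hookrightarrow \left( L_{\vartheta _{3}}^{p},\ell ^{1}\right) $ would be compact.

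It then remains only to invoke the appropriate earlier result in each case. In case (i) the dominant weight is $\vartheta _{1}$, and I would apply Theorem \ref{tendtozero} with $\vartheta =\vartheta _{3}$: its hypotheses $\vartheta _{3}\prec \vartheta _{1}$ and ``$\vartheta _{3}/\vartheta _{1}$ does not tend to zero'' are precisely the assumptions of (i), the witnessing bounded sequence being $f_{n}=\left( \vartheta _{1}\left( t_{n}\right) \right) ^{-1}T_{t_{n}}f$ for $t_{n}\longrightarrow \infty $, whose boundedness in the source is immediate from Theorem \ref{statement}. In case (ii) the dominant weight is $\vartheta _{2}$, and I would instead use the $\vartheta _{2}$-scaled analogue of Theorem \ref{tendtozero} (the theorem proved with $f_{n}=\left( \vartheta _{2}\left( t_{n}\right) \right) ^{-1}T_{t_{n}}f$); its hypotheses $\vartheta _{1}\prec \vartheta _{2}$, $\vartheta _{3}\prec \vartheta _{2}$ and ``$\vartheta _{3}/\vartheta _{2}$ does not tend to zero'' all follow from the chain $\vartheta _{3}\prec \vartheta _{1}\prec \vartheta _{2}$ together with the ratio condition of (ii). In either case the compactness of the composed embedding contradicts these theorems, so $i$ is never compact.

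The only point demanding care—and the place I would be most careful—is matching each case to the correct normalization. For (i) the test sequence is normalized by $\vartheta _{1}\left( t_{n}\right) ^{-1}$ and its boundedness needs nothing beyond Theorem \ref{statement}; for (ii) the normalization by $\vartheta _{2}\left( t_{n}\right) ^{-1}$ keeps $f_{n}$ bounded in the source only because $\vartheta _{1}\prec \vartheta _{2}$ controls $\vartheta _{1}\left( t_{n}\right) /\vartheta _{2}\left( t_{n}\right) $. Once the scaling is fixed, the reduction is routine and requires no estimate beyond those already assembled in Theorem \ref{true}, Theorem \ref{statement} and Theorem \ref{tendtozero}.
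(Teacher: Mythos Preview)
Your proposal is correct and follows essentially the same route as the paper: establish the continuous inclusion $A_{\vartheta _{1},\vartheta _{2}}^{p,1,q,r}\hookrightarrow A_{\vartheta _{3},\vartheta _{4}}^{p,1,q,r}$ from $\vartheta _{3}\prec \vartheta _{1}$ and $\vartheta _{4}\prec \vartheta _{2}$, then argue by contradiction that a convergent subsequence in the target would converge in $\left( L_{\vartheta _{3}}^{p},\ell ^{1}\right) $, contradicting Theorem~\ref{tendtozero} in case~(i) and its $\vartheta _{2}$-normalized analogue in case~(ii). Your discussion of the two normalizations is in fact more explicit than the paper's, which simply declares case~(ii) ``similar.''
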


\begin{proof}
Let $\vartheta _{4}\prec \vartheta _{2}\prec \vartheta _{1},\vartheta
_{3}\prec \vartheta _{1}.$ Thus, there exist $C_{1},C_{2}>0$ such that $%
\vartheta _{4}\left( x\right) \leq C_{1}\vartheta _{2}\left( x\right) $ and $%
\vartheta _{3}\left( x\right) \leq C_{2}\vartheta _{1}\left( x\right) $ for
all $x\in 
\mathbb{R}
^{d}.$ This follows that $A_{\vartheta _{1},\vartheta _{2}}^{p,1,q,r}\left( 
\mathbb{R}
^{d}\right) \subset A_{\vartheta _{3},\vartheta _{4}}^{p,1,q,r}\left( 
\mathbb{R}
^{d}\right) $ and the unit function $I:A_{\vartheta _{1},\vartheta
_{2}}^{p,1,q,r}\left( 
\mathbb{R}
^{d}\right) \longrightarrow A_{\vartheta _{3},\vartheta
_{4}}^{p,1,q,r}\left( 
\mathbb{R}
^{d}\right) $ is continuous. Now, suppose that $\frac{\vartheta _{3}\left(
x\right) }{\vartheta _{1}\left( x\right) }$ does not tend to zero in $%
\mathbb{R}
^{d}$ as $x\longrightarrow \infty $ and $\left( f_{n}\right) _{n\in 
\mathbb{N}
}$ is a bounded sequence in $A_{\vartheta _{1},\vartheta
_{2}}^{p,1,q,r}\left( 
\mathbb{R}
^{d}\right) .$ If any subsequence of $\left( f_{n}\right) _{n\in 
\mathbb{N}
}$ is convergent in $A_{\vartheta _{3},\vartheta _{4}}^{p,1,q,r}\left( 
\mathbb{R}
^{d}\right) ,$ then this subsequence is also convergent in $\left(
L_{\vartheta _{3}}^{p},\ell ^{1}\right) .$ However, this conflict with
Theorem \ref{tendtozero}, because of the fact that the embedding of $%
A_{\vartheta _{1},\vartheta _{2}}^{p,1,q,r}\left( 
\mathbb{R}
^{d}\right) $ into $\left( L_{\vartheta _{3}}^{p},\ell ^{1}\right) $ is
never compact. This completes \textit{(i)}. In similar way, \textit{(ii)}
can be proved.
\end{proof}

\section{\textbf{Multipliers of }$A_{\protect\vartheta _{1},\protect%
\vartheta _{2}}^{1,1,q,r}\left( G\right) $}

In this section, we discuss multipliers of the spaces $A_{\vartheta
_{1},\vartheta _{2}}^{1,1,q,r}\left( G\right) $. We define the space 
\begin{equation*}
M_{A_{\vartheta _{1},\vartheta _{2}}^{1,1,q,r}\left( G\right) }=\left\{ \mu
\in M(\vartheta _{1}):\left\Vert \mu \right\Vert _{M}\leq C(\mu )\right\}
\end{equation*}%
where%
\begin{equation*}
\left\Vert \mu \right\Vert _{M}=\sup \left\{ \frac{\left\Vert \mu \ast
f\right\Vert _{\vartheta _{1},\vartheta _{2}}^{1,1,q,r}}{\left\Vert
f\right\Vert _{11,\vartheta _{1}}}:f\in \left( L_{\vartheta _{1}}^{1},\ell
^{1}\right) ,\text{ }f\neq 0,\text{ }\widehat{f}\in C_{c}(\widehat{G}%
)\right\} .
\end{equation*}%
By \cite[Proposition 2.1]{Gur1}, we get $M_{A_{\vartheta _{1},\vartheta
_{2}}^{1,1,q,r}\left( G\right) }\neq \left\{ 0\right\} .$

\begin{theorem}
If $\vartheta _{1}$ satisfies (BD) condition, then for a linear operator $%
T:\left( L_{\vartheta _{1}}^{1},\ell ^{1}\right) \longrightarrow
A_{\vartheta _{1},\vartheta _{2}}^{1,1,q,r}\left( G\right) $ the assertions
below are equivalent:

\begin{enumerate}
\item[\textit{(i)}] $T\in Hom_{\left( L_{\vartheta _{1}}^{1},\ell
^{1}\right) }\left( \left( L_{\vartheta _{1}}^{1},\ell ^{1}\right)
,A_{\vartheta _{1},\vartheta _{2}}^{1,1,q,r}\left( G\right) \right) .$

\item[\textit{(ii)}] There exists a unique $\mu \in M_{_{A_{\vartheta
_{1},\vartheta _{2}}^{1,1,q,r}\left( G\right) }}$ such that $Tf=\mu \ast f$
for every $f\in \left( L_{\vartheta _{1}}^{1},\ell ^{1}\right) $. Moreover
the correspondence between $T$ and $\mu $ defines an isomorphism between $%
Hom_{\left( L_{\vartheta _{1}}^{1},\ell ^{1}\right) }\left( \left(
L_{\vartheta _{1}}^{1},\ell ^{1}\right) ,A_{\vartheta _{1},\vartheta
_{2}}^{1,1,q,r}\left( G\right) \right) $ and $M_{A_{\vartheta _{1},\vartheta
_{2}}^{1,1,q,r}\left( G\right) }$.
\end{enumerate}
\end{theorem}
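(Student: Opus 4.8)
The plan is to prove the two implications separately and then read off the isomorphism from the construction. I begin with the routine direction $(ii)\Rightarrow(i)$. Suppose $Tf=\mu\ast f$ for some $\mu\in M_{A_{\vartheta_1,\vartheta_2}^{1,1,q,r}(G)}$. Boundedness of $T$ into $A_{\vartheta_1,\vartheta_2}^{1,1,q,r}(G)$ is immediate from the very definition of $\left\Vert \mu\right\Vert_M$ together with the density of the functions $f$ with $\widehat f\in C_c(\widehat G)$ in $(L_{\vartheta_1}^1,\ell^1)$; this density is exactly what the $(BD)$ hypothesis supplies, via the set $F_{0,\vartheta_1}$ of Theorem \ref{segal}. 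That $T$ is a module homomorphism follows from associativity and commutativity of convolution: for $g\in(L_{\vartheta_1}^1,\ell^1)$ one has $T(g\ast f)=\mu\ast(g\ast f)=g\ast(\mu\ast f)=g\ast Tf$. Uniqueness of $\mu$ is clear, since two measures inducing the same convolution operator must agree on a dense set of functions.

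The key observation for the converse is that for $p=q=1$ the weighted amalgam norm collapses to the Beurling norm: by (\ref{Normpq2}), $\left\Vert f\right\Vert_{11,\vartheta_1}=\sum_{\alpha\in J}\int_{I_\alpha}|f|\vartheta_1\,dx=\int_G|f|\vartheta_1\,dx=\left\Vert f\right\Vert_{1,\vartheta_1}$, so that $(L_{\vartheta_1}^1,\ell^1)=L_{\vartheta_1}^1(G)$ isometrically. Thus the domain of $T$ is precisely the Beurling algebra, whose multiplier theory is already available. To exploit this I would compose $T$ with the continuous inclusion $\iota:A_{\vartheta_1,\vartheta_2}^{1,1,q,r}(G)\hookrightarrow L_{\vartheta_1}^1(G)$, which holds because $\left\Vert f\right\Vert_{1,\vartheta_1}\leq\left\Vert f\right\Vert_{\vartheta_1,\vartheta_2}^{1,1,q,r}$. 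The composite $S=\iota\circ T:L_{\vartheta_1}^1(G)\to L_{\vartheta_1}^1(G)$ is again a module homomorphism, since $\iota$ leaves the underlying function unchanged and therefore commutes with convolution.

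Assuming $(i)$, I would now invoke the classical characterization of the multipliers of the Beurling algebra, namely $Hom_{L_{\vartheta_1}^1(G)}(L_{\vartheta_1}^1(G))\cong M(\vartheta_1)$ (see \cite{Gau}), to produce a unique $\mu\in M(\vartheta_1)$ with $Sf=\mu\ast f$ for all $f$. Since $S$ and $T$ agree as functions, this gives $Tf=\mu\ast f$, and because $Tf$ lies in $A_{\vartheta_1,\vartheta_2}^{1,1,q,r}(G)$ we conclude $\mu\ast f\in A_{\vartheta_1,\vartheta_2}^{1,1,q,r}(G)$ for every $f$. The estimate $\left\Vert \mu\ast f\right\Vert_{\vartheta_1,\vartheta_2}^{1,1,q,r}=\left\Vert Tf\right\Vert_{\vartheta_1,\vartheta_2}^{1,1,q,r}\leq\left\Vert T\right\Vert\left\Vert f\right\Vert_{11,\vartheta_1}$, restricted to the functions with $\widehat f\in C_c(\widehat G)$, shows $\left\Vert \mu\right\Vert_M\leq\left\Vert T\right\Vert<\infty$, so in fact $\mu\in M_{A_{\vartheta_1,\vartheta_2}^{1,1,q,r}(G)}$, which is $(ii)$.

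Finally, the correspondence $T\mapsto\mu$ is linear and, by the two implications, bijective; the reverse estimate $\left\Vert T\right\Vert\leq\left\Vert \mu\right\Vert_M$ comes again from the density of $\{f:\widehat f\in C_c(\widehat G)\}$ (the $(BD)$ condition), yielding $\left\Vert T\right\Vert=\left\Vert \mu\right\Vert_M$ and hence an isometric isomorphism. I expect the genuine obstacle to be not the construction of $\mu$ itself, which the known Beurling multiplier theorem hands over once the identification $(L_{\vartheta_1}^1,\ell^1)=L_{\vartheta_1}^1(G)$ is in place, but rather the upgrade from $\mu\in M(\vartheta_1)$ to $\mu\in M_{A_{\vartheta_1,\vartheta_2}^{1,1,q,r}(G)}$: one must confirm that $\mu\ast(\cdot)$ is bounded into the smaller space $A_{\vartheta_1,\vartheta_2}^{1,1,q,r}(G)$ with its stronger norm, for which the decisive inputs are precisely that $Tf$ was assumed to land in $A_{\vartheta_1,\vartheta_2}^{1,1,q,r}(G)$ and the density needed to control $\left\Vert \mu\right\Vert_M$ by $\left\Vert T\right\Vert$.
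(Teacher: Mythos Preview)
Your argument is correct, but it follows a different route from the paper's. The paper's proof is essentially a one-line citation: having established in Theorem~\ref{segal} that $A_{\vartheta_1,\vartheta_2}^{1,1,q,r}(G)$ is an $S_{\vartheta_1}$ algebra (a generalized Segal algebra in the sense of Cigler), it simply invokes Proposition~2.4 of \cite{Gur1}, which gives the multiplier characterization for any such $S_{\vartheta_1}$ algebra. Your approach instead unpacks the situation directly: you exploit the identification $(L_{\vartheta_1}^1,\ell^1)=L_{\vartheta_1}^1(G)$, compose $T$ with the inclusion into $L_{\vartheta_1}^1(G)$, and appeal to Gaudry's classical description \cite{Gau} of $Hom_{L_{\vartheta_1}^1(G)}(L_{\vartheta_1}^1(G))$ as $M(\vartheta_1)$, after which the upgrade to $M_{A_{\vartheta_1,\vartheta_2}^{1,1,q,r}(G)}$ is a boundedness check. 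What your approach buys is self-containment relative to more classical sources and an explicit equality $\left\Vert T\right\Vert=\left\Vert\mu\right\Vert_M$; what the paper's approach buys is brevity and a clear placement of the result within the $S_{\vartheta}$-algebra framework, where the multiplier theorem is already packaged. Both routes use the $(BD)$ condition at the same point, namely to secure density of $\{f:\widehat f\in C_c(\widehat G)\}$ in $L_{\vartheta_1}^1(G)$.
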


\begin{proof}
It is known that $A_{\vartheta _{1},\vartheta _{2}}^{1,1,q,r}\left( G\right) 
$ is a $S_{\vartheta _{1}}$ space by Theorem \ref{segal}. Thus, we get the
desired result if we consider the Proposition 2.4 in \cite{Gur1}.
\end{proof}

\begin{theorem}
If $\vartheta _{1}$ satisfies (BD) condition and $T\in Hom_{\left(
L_{\vartheta _{1}}^{1},\ell ^{1}\right) }\left( A_{\vartheta _{1},\vartheta
_{2}}^{1,1,q,r}\left( G\right) \right) $, then there is a unique pseudo
measure $\sigma \in \left( A(\widehat{G})\right) ^{\ast }$ (see \cite{Reit})
such that $Tf=\sigma \ast f$ for all $f\in A_{\vartheta _{1},\vartheta
_{2}}^{1,1,q,r}\left( G\right) $.
\end{theorem}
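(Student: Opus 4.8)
The plan is to show that any such module homomorphism $T$ is a Fourier multiplier and that its symbol is essentially bounded, so that it is realized by convolution with a pseudomeasure. First I would prove that $T$ commutes with translations. Since $\vartheta _{1}$ satisfies the (BD) condition, the preceding theorem furnishes approximate units $\left( e_{\alpha }\right) $ for $A_{\vartheta _{1},\vartheta _{2}}^{1,1,q,r}(G)$ with $e_{\alpha }\ast f\rightarrow f$ in $\left\Vert .\right\Vert _{\vartheta _{1},\vartheta _{2}}^{1,1,q,r}$ for every $f$. Writing $T_{y}f=\lim_{\alpha }\left( T_{y}e_{\alpha }\right) \ast f$ (using $T_{y}(e_{\alpha }\ast f)=(T_{y}e_{\alpha })\ast f$ and the continuity of $T_{y}$ from Theorem \ref{statement}) and applying the module identity $T\left( g\ast f\right) =g\ast Tf$ with $g=T_{y}e_{\alpha }\in \left( L_{\vartheta _{1}}^{1},\ell ^{1}\right) $, one obtains
\[
T(T_{y}f)=\lim_{\alpha }\left( T_{y}e_{\alpha }\right) \ast Tf=\lim_{\alpha }T_{y}\left( e_{\alpha }\ast Tf\right) =T_{y}(Tf),
\]
since $e_{\alpha }\ast Tf\rightarrow Tf$ in $A_{\vartheta _{1},\vartheta _{2}}^{1,1,q,r}(G)$ and $T$ is bounded. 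Hence $T$ is translation invariant.

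Next I would extract the multiplier symbol. For $f,g\in A_{\vartheta _{1},\vartheta _{2}}^{1,1,q,r}(G)\subseteq \left( L_{\vartheta _{1}}^{1},\ell ^{1}\right) $ the module property gives $T(f\ast g)=f\ast Tg=g\ast Tf$, and taking Fourier transforms yields $\widehat{f}\,\widehat{Tg}=\widehat{g}\,\widehat{Tf}$. Thus the quotient $m:=\widehat{Tf}/\widehat{f}$ is independent of $f$ on $\{\widehat{f}\neq 0\}$ and defines a measurable function $m$ on $\widehat{G}$ with $\widehat{Tf}=m\,\widehat{f}$ for all $f$. That $m$ is determined on all of $\widehat{G}$ uses that the Fourier transforms of functions in $A_{\vartheta _{1},\vartheta _{2}}^{1,1,q,r}(G)$ have no common zero: by Theorem \ref{segal} the class $F_{0,\vartheta _{1}}$ lies in the space, and its modulates $M_{t}f$, for which $\widehat{M_{t}f}=T_{t}\widehat{f}$ is a compactly supported bump centred at $t$, realize a prescribed nonvanishing value at any point of $\widehat{G}$.

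Finally, the crux is to show $m\in L^{\infty }(\widehat{G})$, i.e. that $\widehat{\sigma }=m$ is the transform of a genuine pseudomeasure $\sigma \in \left( A(\widehat{G})\right) ^{\ast }$. Applying $\left\Vert \widehat{Tf}\right\Vert _{qr,\vartheta _{2}}\leq \left\Vert Tf\right\Vert _{\vartheta _{1},\vartheta _{2}}^{1,1,q,r}\leq \left\Vert T\right\Vert \,\left\Vert f\right\Vert _{\vartheta _{1},\vartheta _{2}}^{1,1,q,r}$ to the modulates $f=M_{t}f_{0}$ and localizing the estimate $\left\Vert m\,T_{t}\widehat{f_{0}}\right\Vert _{qr,\vartheta _{2}}\leq \left\Vert T\right\Vert \left( \left\Vert f_{0}\right\Vert _{11,\vartheta _{1}}+\left\Vert T_{t}\widehat{f_{0}}\right\Vert _{qr,\vartheta _{2}}\right) $, one exploits that modulation leaves $\left\Vert M_{t}f_{0}\right\Vert _{11,\vartheta _{1}}=\left\Vert f_{0}\right\Vert _{11,\vartheta _{1}}$ unchanged while $\left\Vert T_{t}\widehat{f_{0}}\right\Vert _{qr,\vartheta _{2}}$ grows like $\vartheta _{2}(t)$; the common factor $\vartheta _{2}(t)$ then cancels and, shrinking the support of $\widehat{f_{0}}$ and using a Lebesgue-point argument, yields $|m(t)|\leq C\left\Vert T\right\Vert $ for almost every $t$. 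With $m$ bounded, $\sigma $ is the pseudomeasure with $\widehat{\sigma }=m$ and $Tf=\sigma \ast f$ for all $f$; uniqueness follows because $\widehat{\sigma _{1}}\,\widehat{f}=\widehat{\sigma _{2}}\,\widehat{f}$ for all $f$ forces $\widehat{\sigma _{1}}=\widehat{\sigma _{2}}$ by the totality of $\{\widehat{f}\}$ just noted. This is the weighted-amalgam counterpart of the Segal-algebra multiplier theorem, so once $A_{\vartheta _{1},\vartheta _{2}}^{1,1,q,r}(G)$ is recognized as the $S_{\vartheta _{1}}$ algebra of Theorem \ref{segal} the machinery of \cite{Gur1} and \cite{Reit} applies; the essential boundedness of the symbol $m$ against the competing weights $\vartheta _{1}$ and $\vartheta _{2}$ is the main obstacle.
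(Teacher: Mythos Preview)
The paper's proof is a two-line black-box citation: it simply observes that $A_{\vartheta _{1},\vartheta _{2}}^{1,1,q,r}(G)$ is an $S_{\vartheta _{1}}$ algebra (Theorem~\ref{segal}) and a Banach $(L_{\vartheta _{1}}^{1},\ell ^{1})$-module (Theorem~\ref{module}), and then invokes Theorem~5 of \cite{Dogan}, which is exactly the abstract multiplier theorem for such spaces. Your closing sentence captures this route precisely, so in that sense you have the paper's argument.

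Your direct attack, however, is genuinely different, and the crucial third step has a gap. The claim that ``the common factor $\vartheta _{2}(t)$ then cancels'' is not accurate as written: in
\[
\left\Vert m\,T_{t}\widehat{f_{0}}\right\Vert _{qr,\vartheta _{2}}\leq \left\Vert T\right\Vert \left( \left\Vert f_{0}\right\Vert _{11,\vartheta _{1}}+\left\Vert T_{t}\widehat{f_{0}}\right\Vert _{qr,\vartheta _{2}}\right)
\]
the first term on the right carries no $\vartheta _{2}(t)$. One can salvage a uniform local $L^{q}$ bound on $m$ using $\vartheta _{2}\geq 1$ and submultiplicativity, but this yields only $\sup_{t}\Vert m\Vert _{L^{q}(t+K)}<\infty$ for a \emph{fixed} bump $\widehat{f_{0}}$ with support $K$, not $m\in L^{\infty }$. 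Your proposed cure---shrinking the support of $\widehat{f_{0}}$ and passing to Lebesgue points---runs into the uncertainty principle: as $\mathrm{supp}\,\widehat{f_{0}}$ shrinks, $f_{0}$ spreads, and on a noncompact group with an unbounded weight $\vartheta _{1}$ the quantity $\Vert f_{0}\Vert _{11,\vartheta _{1}}$ will in general blow up, destroying the uniformity you need. Thus the essential boundedness of $m$ is not established by your sketch, and you have yourself flagged it as ``the main obstacle''.

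In short: the paper sidesteps this obstacle entirely by appealing to the ready-made Segal-algebra machinery of \cite{Dogan}; your direct route would require a different mechanism to force $m\in L^{\infty }(\widehat{G})$ than the modulation/localization argument you outline.
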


\begin{proof}
It is known that $A_{\vartheta _{1},\vartheta _{2}}^{1,1,q,r}\left( G\right) 
$ is a $S_{\vartheta _{1}}$ space by Theorem \ref{segal} and a Banach module
over $\left( L_{\vartheta _{1}}^{1},\ell ^{1}\right) $ by Theorem \ref%
{module}. Thus, the proof is completed by Theorem 5 in \cite{Dogan}.
\end{proof}

\begin{theorem}
The multiplier space $Hom_{\left( L_{\vartheta _{1}}^{1},\ell ^{1}\right)
}\left( \left( L_{\vartheta _{1}}^{1},\ell ^{1}\right) ,\left( A_{\vartheta
_{1},\vartheta _{2}}^{1,1,q,r}\left( G\right) \right) ^{\ast }\right) $ is
isomorphic to $\left( L_{\vartheta _{1}^{-1}}^{\infty },\ell ^{\infty
}\right) \times \left( L_{\vartheta _{2}^{-1}}^{q^{\prime }},\ell
^{r^{\prime }}\right) /K$.
\end{theorem}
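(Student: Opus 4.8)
The plan is to read the statement off the Rieffel duality $Hom_{A}(V,W^{\ast })\cong (V\otimes _{A}W)^{\ast }$ recorded in the preliminaries (Corollary 2.13 of \cite{Rief}), taken with the convolution Banach algebra $A=\left( L_{\vartheta _{1}}^{1},\ell ^{1}\right) $, with $V=\left( L_{\vartheta _{1}}^{1},\ell ^{1}\right) $, and with $W=A_{\vartheta _{1},\vartheta _{2}}^{1,1,q,r}\left( G\right) $, so that $W^{\ast }$ is exactly the dual space appearing in the theorem. The whole argument then reduces to three ingredients: that $W$ is an essential Banach $A$-module, that consequently $A\otimes _{A}W\cong W$, and that $W^{\ast }$ is computed by Theorem \ref{isomorphic}.

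First I would record the module structure. For $f\in \left( L_{\vartheta _{1}}^{1},\ell ^{1}\right) $ and $g\in A_{\vartheta _{1},\vartheta _{2}}^{1,1,q,r}\left( G\right) $ we have $f\ast g\in \left( L_{\vartheta _{1}}^{1},\ell ^{1}\right) $ because the latter is a convolution Banach algebra, and
\begin{equation*}
\left\Vert \widehat{f\ast g}\right\Vert _{qr,\vartheta _{2}}=\left\Vert \widehat{f}\,\widehat{g}\right\Vert _{qr,\vartheta _{2}}\leq \left\Vert \widehat{f}\right\Vert _{\infty }\left\Vert \widehat{g}\right\Vert _{qr,\vartheta _{2}}\leq \left\Vert f\right\Vert _{11,\vartheta _{1}}\left\Vert \widehat{g}\right\Vert _{qr,\vartheta _{2}},
\end{equation*}
so $f\ast g\in A_{\vartheta _{1},\vartheta _{2}}^{1,1,q,r}\left( G\right) $ with a bounded bilinear action, exactly as in the proof of Theorem \ref{Banach algebra}. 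For essentiality I would take an approximate identity $\left( e_{\alpha }\right) $ of $\left( L_{\vartheta _{1}}^{1},\ell ^{1}\right) $ (normalized nonnegative bumps in $C_{c}(G)$ shrinking to the identity) whose Fourier transforms satisfy $\widehat{e_{\alpha }}\to 1$ pointwise and $\sup _{\alpha }\left\Vert \widehat{e_{\alpha }}\right\Vert _{\infty }<\infty $. Then $\left\Vert e_{\alpha }\ast g-g\right\Vert _{11,\vartheta _{1}}\to 0$, while on the Fourier side $\widehat{e_{\alpha }\ast g}-\widehat{g}=\left( \widehat{e_{\alpha }}-1\right) \widehat{g}\to 0$ pointwise and is dominated by $\left( 1+\sup _{\alpha }\left\Vert \widehat{e_{\alpha }}\right\Vert _{\infty }\right) \left\vert \widehat{g}\right\vert \in \left( L_{\vartheta _{2}}^{q},\ell ^{r}\right) $; since $1\leq q,r<\infty $ dominated convergence is valid in $\left( L_{\vartheta _{2}}^{q},\ell ^{r}\right) $, so $\left\Vert \left( \widehat{e_{\alpha }}-1\right) \widehat{g}\right\Vert _{qr,\vartheta _{2}}\to 0$. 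Hence $e_{\alpha }\ast g\to g$ in $A_{\vartheta _{1},\vartheta _{2}}^{1,1,q,r}\left( G\right) $, so $A\ast W$ is dense in $W$ and $W$ is essential.

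With essentiality in hand, I would invoke the standard identification of the module tensor product: since $\left( L_{\vartheta _{1}}^{1},\ell ^{1}\right) $ carries a bounded approximate identity and $W$ is essential, the action map $a\otimes w\mapsto a\ast w$ induces an isomorphism
\begin{equation*}
\left( L_{\vartheta _{1}}^{1},\ell ^{1}\right) \otimes _{\left( L_{\vartheta _{1}}^{1},\ell ^{1}\right) }A_{\vartheta _{1},\vartheta _{2}}^{1,1,q,r}\left( G\right) \cong A_{\vartheta _{1},\vartheta _{2}}^{1,1,q,r}\left( G\right) .
\end{equation*}
Substituting this into the Rieffel formula yields
\begin{equation*}
Hom_{\left( L_{\vartheta _{1}}^{1},\ell ^{1}\right) }\left( \left( L_{\vartheta _{1}}^{1},\ell ^{1}\right) ,\left( A_{\vartheta _{1},\vartheta _{2}}^{1,1,q,r}\left( G\right) \right) ^{\ast }\right) \cong \left( A_{\vartheta _{1},\vartheta _{2}}^{1,1,q,r}\left( G\right) \right) ^{\ast }.
\end{equation*}

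Finally I would apply Theorem \ref{isomorphic} in the case $p=1$, for which the conjugate exponent is $p^{\prime }=\infty $, to rewrite the right-hand side as $\left( L_{\vartheta _{1}^{-1}}^{\infty },\ell ^{\infty }\right) \times \left( L_{\vartheta _{2}^{-1}}^{q^{\prime }},\ell ^{r^{\prime }}\right) /K$, which is the asserted isomorphism. The main obstacle is the essentiality step: one must produce an approximate identity of $\left( L_{\vartheta _{1}}^{1},\ell ^{1}\right) $ whose transforms tend boundedly to $1$ and then push the convergence through the second (Fourier) coordinate in the $A_{\vartheta _{1},\vartheta _{2}}^{1,1,q,r}$-norm rather than merely in $L_{\vartheta _{1}}^{1}$; once $W$ is known to be essential, the tensor-product identification and Theorem \ref{isomorphic} are purely formal.
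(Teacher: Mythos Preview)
Your approach is essentially the same as the paper's: both invoke Rieffel's duality $Hom_{A}(V,W^{\ast })\cong (V\otimes _{A}W)^{\ast }$ with $A=V=\left( L_{\vartheta _{1}}^{1},\ell ^{1}\right)$ and $W=A_{\vartheta _{1},\vartheta _{2}}^{1,1,q,r}\left( G\right)$, reduce $A\otimes_{A}W$ to $W$ via essentiality, and then read off $W^{\ast}$ from Theorem~\ref{isomorphic} with $p'=\infty$. The only difference is that the paper simply asserts $\left( L_{\vartheta _{1}}^{1},\ell ^{1}\right)\ast A_{\vartheta _{1},\vartheta _{2}}^{1,1,q,r}(G)=A_{\vartheta _{1},\vartheta _{2}}^{1,1,q,r}(G)$ by citing Theorem~\ref{module}, while you actually carry out the approximate-identity/dominated-convergence argument in $\left( L_{\vartheta _{2}}^{q},\ell ^{r}\right)$ that justifies this factorization; in that sense your write-up is slightly more detailed than the paper's own proof.
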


\begin{proof}
By Theorem \ref{module}, we write $\left( L_{\vartheta _{1}}^{1},\ell
^{1}\right) \ast A_{\vartheta _{1},\vartheta _{2}}^{1,1,q,r}\left( G\right)
=A_{\vartheta _{1},\vartheta _{2}}^{1,1,q,r}\left( G\right) $. Hence by
Corollary 2.13 in \cite{Rief} and Theorem \ref{isomorphic}, we have 
\begin{eqnarray*}
&&Hom_{\left( L_{\vartheta _{1}}^{1},\ell ^{1}\right) }\left( \left(
L_{\vartheta _{1}}^{1},\ell ^{1}\right) ,\left( A_{\vartheta _{1},\vartheta
_{2}}^{1,1,q,r}\left( G\right) \right) ^{\ast }\right) \\
&=&\left( \left( L_{\vartheta _{1}}^{1},\ell ^{1}\right) \ast A_{\vartheta
_{1},\vartheta _{2}}^{1,1,q,r}\left( G\right) \right) ^{\ast } \\
&=&\left( L_{\vartheta _{1}^{-1}}^{\infty },\ell ^{\infty }\right) \times
\left( L_{\vartheta _{2}^{-1}}^{q^{\prime }},\ell ^{r^{\prime }}\right) /K
\end{eqnarray*}%
where $\frac{1}{q}+\frac{1}{q^{\prime }}=1$ and $\frac{1}{r}+\frac{1}{%
r^{\prime }}=1.$
\end{proof}

\bigskip

\end{document}